\newtheorem{theorem}{Theorem}[section]
\newtheorem{remark}[theorem]{Remark}
\newtheorem{lemma}[theorem]{Lemma}
\newtheorem{example}[theorem]{Example}
\newtheorem{proposition}[theorem]{Proposition}
\numberwithin{equation}{section}
\theoremstyle{definition}
\newcommand{\beq}{\begin{equation}}
\newcommand{\eeq}{\end{equation}}
\newcommand{\eps }{\varepsilon}
\newcommand{\R}{\mathbb{R}}
\def\r{\mathbb{R}}
\def\rn{\mathbb{R}^N}
\def\z{\mathbb{Z}}
\def\n{\mathbb{N}}
\def\cc{\mathbb{C}}
\def\eps{\varepsilon}
\def\rh{\rightharpoonup}
\def\irn{\displaystyle\int_{\r^N}}
\def\vp{\varphi}
\def\cC{\mathcal{C}}
\def\cN{\mathcal{N}}
\def\bar{\overline}
\def\hat{\widehat}
\def\d{\,\mathrm{d}}
\title{An upper bound for the least energy of a sign-changing solution to a zero mass problem}
\date{\today}
\author{M\'onica Clapp\footnote{M. Clapp was supported by CONACYT (Mexico) through the research grant A1-S-10457.}, \quad Liliane A. Maia\footnote{L. Maia was supported by FAPDF, CNPq/PQ 309866/2020-0 (Brazil), and PROEX/CAPES (Brazil).} \quad and \quad Benedetta Pellacci\footnote{B. Pellacci was supported  by the PRIN-2017-JPCAPN Grant: “Equazioni differenziali alle derivate parziali non lineari”, by the project Vain-Hopes within the program VALERE: VAnviteLli pEr la RicErca and by the INdAM-GNAMPA group.}}
\begin{document}

\maketitle

\begin{abstract}
We give an upper bound for the least energy of a sign-changing solution to the the nonlinear scalar field equation
$$-\Delta u = f(u), \qquad u\in D^{1,2}(\mathbb{R}^{N}),$$
where $N\geq5$ and the nonlinearity $f$ is subcritical at infinity and supercritical near the origin. More precisely, we establish the existence of a  nonradial sign-changing solution whose energy is smaller that $12c_0$ if $N=5,6$ and smaller than $10c_0$ if $N\geq 7$, where $c_0$ is the ground state energy.
\smallskip

\noindent \textbf{Keywords and phrases:} Scalar field equation, zero mass, nodal solution, energy estimates.
\smallskip

\noindent \textbf{2010 Mathematical Subject Classification:} 35J61, 35B08, 35B06, 35B45, 35J20.
\end{abstract}

\section{Introduction}
\label{sec:introduction}

The aim of this note is to give an upper bound for the least energy of a sign-changing solution to the problem 
\begin{equation}
\label{prob}
-\Delta u = f(u), \qquad u\in D^{1,2}(\R^{N}),
\end{equation}
where $N\geq5$ and the nonlinearity $f$ is subcritical at infinity and supercritical near the origin. More precisely, we assume
\begin{enumerate}
\item[$(f_1)$] $f\in\cC_\mathrm{loc}^{1,\alpha}(\r)$ with $\alpha\in\big(\frac{N}{2(N-2)},1\big]$, and there exist $a_{1}>0$ and $2<p<2^*:=\frac{2N}{N-2}<q$ such that, for $\kappa=-1,0,1$,
\begin{equation}
|f^{(\kappa)}(s)|\leq
\begin{cases}
a_{1}|s|^{p-(\kappa+1)} & \text{if}\ |s|\geq 1,\\
a_{1}|s|^{q-(\kappa+1)} & \text{if}\ |s|\leq 1,
\end{cases}
\end{equation}
where $f^{(-1)}:=F,$ $f^{(0)}:=f,$ $f^{(1)}:=f',$ and $F(s):=\int_{0}^{s}f(t)\mathrm{d}t.$
\item[$(f_2)$] There is a constant $\theta>2$ such that $0\leq\theta F(s)\leq f(s)s<f'(s)s^{2}$ for all $s>0$.
\item[$(f_3)$] $f$ is odd.
\end{enumerate}

In their seminal paper \cite{bl} Berestycki and Lions showed that problem \eqref{prob} has a ground state solution which is positive, radially symmetric and decreasing in the radial direction. One or multiple positive solutions for a similar equation involving a scalar potential that decays to zero at infinity, both in the whole space and in an exterior domain, have been obtained, for instance, in \cite{bm,cm,cmp,km}.

The existence of nonradial sign-changing solutions to \eqref{prob} was recently shown by Mederski in \cite{m}. It is readily seen that the energy of any sign-changing solution must be greater than twice the energy of the ground state. But, to our knowledge, there are no upper estimates for the least energy of a sign-changing solution to \eqref{prob}.

Our aim is to prove the following result.

\begin{theorem}\label{thm:main}
Assume that  $f$ satisfies $(f_1)-(f_3)$. Then, there exists a nonradial sign-changing solution $\hat\omega$ to the problem \eqref{prob} whose energy satisfies
\begin{equation}
2c_0<\frac12\irn|\nabla \hat\omega|^{2}-\irn F(\hat\omega)<
\begin{cases}
12c_0 &\text{if \ }N=5,6,\\
10c_0 &\text{if \ }N\geq 7,
\end{cases}
\end{equation}
where $c_0$ is the ground state energy of \eqref{prob}. Furthermore, for each $(z_{1},z_{2},y)\in \R^{N}\equiv\mathbb{C} \times\mathbb{C} \times \mathbb{R}^{N-4}$,
\begin{enumerate}
\item[$(a)$]
$ \hat\omega(z_{1},z_{2},y)=\hat\omega(\mathrm{e}^{2\pi\mathrm{i}j/5}z_{1},\mathrm{e}^{2\pi\mathrm{i}j/5}z_{2},y) $ \ for all $j=0,\ldots,4$,
\item[$(b)$] $\hat\omega(z_{1},z_{2},y) = -\hat\omega(z_{2},z_{1},y)$,  
\item[$(c)$] $\hat\omega(z_{1},z_{2},y_1) = \hat\omega(z_{1},z_{2},y_2)$ \ if $|y_1| = |y_2|$,
\end{enumerate}
and $\hat\omega$ has least energy among all nontrivial solutions satisfying $(a), (b), (c)$.
\end{theorem}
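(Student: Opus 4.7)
The plan is to apply a standard constrained variational scheme in the symmetric subspace associated with the symmetries $(a)$--$(c)$, the main novelty being the explicit construction of a test function realising the quantitative upper bound.

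First I would set up the symmetric variational framework. Let $G\subset O(N)$ be the compact group generated by the joint $\mathbb{Z}/5$-rotations on $\cc^{2}$, the swap of $z_{1}$ and $z_{2}$, and the orthogonal group $O(N-4)$ acting on the $y$-variable, and let $\phi:G\to\{\pm1\}$ be the character which is $-1$ on the swap and $+1$ on the other generators. Setting
$$
\cH:=\{u\in D^{1,2}(\rn): u(gx)=\phi(g)u(x)\ \text{for all}\ g\in G\},
$$
any critical point of $J|_{\cH}$, with $J(u):=\tfrac12\irn|\nabla u|^{2}-\irn F(u)$, is a weak solution of \eqref{prob} by Palais' principle of symmetric criticality, and is automatically sign-changing and nonradial since $\phi\not\equiv1$ and $G$ is not the full orthogonal group. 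Under $(f_1)$--$(f_2)$, the Nehari set $\cN:=\{u\in\cH\setminus\{0\}:\langle J'(u),u\rangle=0\}$ is a natural constraint met exactly once by each ray, so the nontrivial solutions in $\cH$ correspond to the critical points of $J|_{\cN}$. I would set $c_G:=\inf_{\cN}J$ and obtain $c_G>2c_0$ by comparing $u\in\cN$ with its positive and negative parts.

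Second, I would show via a $G$-equivariant concentration-compactness argument that any Palais--Smale sequence in $\cH$ either converges or splits into full $G$-orbits of nontrivial solutions of \eqref{prob}. The smallest $G$-orbit compatible with the antisymmetry encoded by $\phi$ consists of five joint rotates of a positive ground state paired antisymmetrically with their swap images, yielding a threshold of $10c_0$. It therefore suffices to prove the strict bound $c_G<10c_0$ (resp. $<12c_0$) asserted in the theorem; minimisation on $\cN$ then produces $\hat\omega$.

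Third, and most importantly, I would construct the test function. Let $w>0$ denote the radial positive ground state of \eqref{prob} and set, for $R>0$,
$$
\xi_j^{+}:=(\e^{2\pi\mathrm{i}j/5},0,0),\qquad \xi_j^{-}:=(0,\e^{2\pi\mathrm{i}j/5},0),\qquad j=0,\ldots,4,
$$
and
$$
U_{R}(x):=\sum_{j=0}^{4}\big[w(x-R\xi_j^{+})-w(x-R\xi_j^{-})\big].
$$
Radiality of $w$ makes $U_{R}\in\cH$. Projecting onto $\cN$ via the unique positive dilate $t(R)$ and using the polynomial decay $w(x)\sim c_\infty|x|^{-(N-2)}$ of zero-mass ground states together with $(f_1)$, I would expand $J(t(R)U_{R})$ as $R\to\infty$. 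The Nehari identity makes the leading cross terms cancel, so the deviation of $J(t(R)U_{R})$ from $10c_0$ is controlled by next-order interactions between the pairs of bumps. Organising these by the geometry of same-sign pairs on each of the two circles of radius $R$ in $\cc^{2}$ (nearest-neighbour distance $2R\sin(\pi/5)$) and opposite-sign pairs across the two circles (distance $R\sqrt{2}$), and optimising in $R$, yields $c_G<10c_0$ for $N\geq7$ and the slightly weaker $c_G<12c_0$ for $N=5,6$.

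The main obstacle is precisely this third step. Because the leading kinetic cross term $\int\nabla w\cdot\nabla w(\cdot-p)$ cancels against the linearisation of $\int[F(w+w(\cdot-p))-F(w)-F(w(\cdot-p))]$ via the Nehari identity $\int|\nabla w|^{2}=\int f(w)w$, one is forced to extract subleading interaction terms whose signs and magnitudes depend delicately on the relative signs of neighbouring bumps and on the power governing the behaviour of $f$ at the origin through $(f_1)$. The splitting between the sharp bound $10c_{0}$ in high dimensions and the relaxed bound $12c_{0}$ in low dimensions reflects the rate at which the negative (attractive) opposite-sign contributions can dominate the positive (repulsive) same-sign contributions against the baseline decay $|p|^{-(N-2)}$; carrying out this asymptotic bookkeeping precisely is the technical heart of the argument.
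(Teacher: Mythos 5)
Your overall architecture -- symmetric Nehari minimization, a concentration--compactness threshold, and a multibump ansatz along the $G$-orbit of $(1,0,0)$ -- is exactly the paper's (your only structural deviation, building the $O(N-4)$-invariance in $y$ into the group instead of obtaining property $(c)$ a posteriori by Lopes' reflection argument as the paper does, is a legitimate variant, provided you redo the orbit/isotropy analysis for the larger group). But the heart of your step three contains a genuine sign error. Expanding $J\big(\sum_i\eps_i\,\omega(\cdot-R\zeta_i)\big)$ with $\eps_i=\pm1$, the first-order additivity of $F$ gives, up to admissible errors, $2mJ(\omega)-\sum_{i<j}\eps_i\eps_j\int_{\R^N} f(\omega(x-R\zeta_i))\,\omega(x-R\zeta_j)\,\mathrm{d}x$, so \emph{same-sign} bumps attract (lower the energy) and \emph{opposite-sign} bumps repel -- the opposite of what you assert. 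Relatedly, the leading cross terms do \emph{not} cancel via the Nehari identity: the signed interaction sum of order $R^{2-N}$ is precisely the quantity that must be shown strictly positive, while the genuinely next-order terms (the deviation from additivity of the non-power nonlinearity, handled in the paper by Lemmas \ref{lem:cmp} and \ref{lem:acp} together with Lemma \ref{lem:cm}, which is where $\alpha>\frac{N}{2(N-2)}$ enters) are shown to be $o(R^{2-N})$. Concretely, with $2m$ bumps one needs the same-circle attraction at nearest-neighbour distance $2R\sin(\pi/m)$ to beat the cross-circle repulsion at distance $\sqrt{2}R$, i.e. $2m\big(2\sin(\pi/m)\big)^{2-N}>m^{2}(\sqrt{2})^{2-N}$, which is the paper's condition $m\geq\sqrt{2}\pi(\pi/\sqrt{2})^{1/(N-3)}$; there is no ``optimising in $R$'' -- one simply lets $R\to\infty$.

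This sign condition is also why your treatment of $N=5,6$ cannot work as written. You keep the $\mathbb{Z}_5$-symmetric ten-bump ansatz and claim it yields $c_G<12c_0$ in low dimensions; but with the $\mathbb{Z}_5$ group your own compactness threshold is $10c_0$, so a bound below $12c_0$ restores no compactness -- and in fact for $m=5$ and $N=5,6$ the net interaction is repulsive (for $N=5$ the same-circle sum of $d_{ij}^{2-N}$ over ordered pairs is about $15.2$ against roughly $17.7$ for the cross-circle pairs), so $J(t_RU_R)>10c_0$ for large $R$ and no strict inequality below the threshold is available from this ansatz. The paper resolves this by changing the group: it takes $m=6$ (twelve bumps) when $N=5,6$, which simultaneously raises the threshold to $12c_0$ and shrinks the nearest-neighbour same-sign distance to $2\sin(\pi/6)=1<\sqrt{2}$, making the net interaction attractive, while $m=5$ works only for $N\geq7$ (Remark \ref{rem:m}). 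A smaller point: strictness of $c^\phi>2c_0$ is not obtained by comparing an arbitrary $u\in\cN^\phi$ with $u^{\pm}$ (that gives only $\geq 2c_0$); the paper gets strictness at the attained minimizer via the maximum principle.
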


In the positive mass case, for the subcritical pure power nonlinearity $f(u)=|u|^{p-2}u$ with $2<p<2^*$, an estimate for the least energy of a sign-changing solution was obtained in \cite[Theorem 1.1]{cs}, and recently improved in \cite[Corollary 1.2]{cso}. On the other hand, it was shown in \cite[Theorem 1.1]{cpw} that the same estimates as in Theorem \ref{thm:main} hold true for the critical pure power nonlinearity $f(u)=|u|^{2^*-2}u$.

As in \cite{m}, to prove the existence of a sign-changing solution to \eqref{prob} we take advantage of suitable symmetries that produce a change of sign by construction. The symmetries introduced in \cite{m}, however, have only infinite and trivial orbits. This does not allow estimating the energy of the solution. Here, in contrast, we consider symmetries given by a finite group. This makes it harder to show existence due to the lack of compactness but, once the existence of a solution is established, one immediately gets an upper estimate for its energy. 

As in \cite{cs,cso,cpw} we use concentration compactness techniques to establish a condition for the existence of a symmetric minimizer for the variational problem associated to \eqref{prob}. Then, we consider a suitable ansatz given as a sum of positive and negative copies of the ground state solution placed along some orbit in a convenient way. But, unlike in the subcritical case where the decay of the ground state is exponential, here the decay is polynomial. A careful estimate for the interaction among the terms of the ansatz, that applies to this situation, is provided by Lemma \ref{lem:cm}.

Another delicate issue is produced by the fact that our nonlinearity is not the pure power one. This asks for a proper estimate of its lack of additivity. Lemma \ref{lem:acp2} gives such an estimate for a nonlinearity $f$ satisfying milder regularity assumptions than those considered in \cite{acp}.

The condition $\alpha\in\big(\frac{N}{2(N-2)},1\big]$ in assumption $(f_1)$ implies that $N\geq 5$. This condition is needed to ensure that some terms measuring the deviation from additivity of the nonlinearity are lower order terms, as pointed out in the proof of Proposition \ref{prop:upperbound}.

This paper is organized as follows. In Section \ref{sec:setting} we study the symmetric variational problem and give a condition for the existence of a symmetric minimizer. In Section \ref{sec:upperbound} we show that this condition is satisfied for some particular symmetries and we prove Theorem \ref{thm:main}. In the appendices we prove some lemmas required to achieve that purpose.

\section{The symmetric variational setting}
\label{sec:setting}

We assume throughout that $f$ satisfies $(f_1)$, $(f_2)$ and $(f_3)$.

Let $G$ be a closed subgroup of the group $O(N)$ of linear isometries of $\rn$ and denote by
$$Gx:=\{gx:g\in G\}\qquad\text{and}\qquad G_x:=\{g\in G:gx=x\}$$ 
the $G$-orbit and the $G$-isotropy group of a point $x\in\rn$. \ The $G$-orbit $Gx$ is $G$-homeomorphic to the homogeneous space $G/G_x$. So both have the same cardinality, i.e., $|Gx|=|G/G_x|$.

Let $\phi:G\to\z_2:=\{-1,1\}$ be a continuous homomorphism of groups satisfying
\begin{itemize}
\item[$(A_\phi)$] If $\phi$ is surjective, then there exists $\zeta\in\rn$ such that $(\ker\phi)\zeta\neq G\zeta$,
\end{itemize} 
where $\ker\phi:=\{g\in G:\phi(g)=1\}$. A function $u:\rn\to\r$ such that
$$u(gx)=\phi(g)u(x) \text{ \ for all \ }g\in G, \ x\in\rn,$$
will be called \emph{$\phi$-equivariant}. If $\phi\equiv 1$ is the trivial homomorphism then $u$ is $G$-invariant, i.e., it is constant on every $G$-orbit, while if $\phi$ is surjective and $u\neq 0$ then $u$ is nonradial and changes sign. If $K$ is a closed subgroup of $G$ we write $\phi|K:K\to\z_2$ for the restriction of $\phi$ to $K$. Note that $\phi|K$ satisfies $(A_{\phi|K})$ if $\phi$ satisfies $(A_\phi)$.

Let $D^{1,2}(\mathbb{R}^{N}):=\{u\in L^{2^*}(\mathbb{R}^{N}):\nabla u\in L^{2}(\mathbb{R}^{N},\mathbb{R}^{N})\}$, with its standard scalar product and norm
\[
\langle u,v\rangle :=\int_{\mathbb{R}^{N}}\nabla u\cdot\nabla v,\text{\qquad}\|u\| :=\left(  \int_{\mathbb{R}^{N}}|\nabla u|^{2}\right)^{1/2},
\]
and set 
$$D^{1,2}(\rn)^\phi:=\{u\in D^{1,2}(\rn): u\text{ is }\phi\text{-equivariant}\}.$$
Assumption $(A_\phi)$ guarantees that $D^{1,2}(\rn)^\phi$ has infinite dimension, see \cite{bcm}. 

By the principle of symmetric criticality \cite{p}, the $\phi$-equivariant solutions to the problem $\eqref{prob}$ are the critical points of the functional \ $J:D^{1,2}(\rn)^\phi\to\r$ \ given by
\[
J(u):=\frac12\|u\|^{2}-\irn F(u),
\]
where $F(u):=\int_{0}^{u}f(s)\,\d s$. This functional is well defined and of class  $\mathcal{C}^{2},$ with derivative
\[
J^{\prime}(u)v=\int_{\mathbb{R}^{N}}\nabla u\cdot\nabla v -\int_{\mathbb{R}^{N}}f(u)v,\qquad u,v\in D^{1,2}(\mathbb{R}^{N})^\phi;
\]
see \cite[Proposition 3.8]{bpr} or \cite[Lemma 2.6]{bm}. The nontrivial $\phi$-equivariant solutions belong to the set
$$
\mathcal{N}^\phi:=\{u\in D^{1,2}(\R^{N})^\phi:u\neq 0,\,J'(u)u=0\},
$$
which is a closed $\mathcal{C}^{1}$-submanifold of $D^{1,2}(\R^{N})^\phi$ and a natural constraint for $J$, and
$$c^\phi:=\inf_{u\in\cN^\phi}J(u)>0;$$
see \cite[Lemma 3.2]{cm}.

Next, we give a description of the minimizing sequences for $J$ on $\cN^\phi$. We need the following lemmas. Set $B_{R}(y):=\{x\in\mathbb{R}^{N}:\left\vert x-y\right\vert <R\}.$

\begin{lemma} \label{lem:lions}
If $(u_{k})$ is bounded in $D^{1,2}(\mathbb{R}^{N})$ and there exists $R>0$ such that
\[
\lim_{k\to\infty}\left(\sup_{y\in\rn}\int_{B_R(y)}|u_k|^2\right)=0,
\]
then \ $\lim_{k\to\infty}\irn f(u_k)u_k=0$.
\end{lemma}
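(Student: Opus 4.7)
The plan is to split the integral across the two growth regimes of $f$ and feed them into a $D^{1,2}$-version of P.-L. Lions' vanishing lemma. Write $A_k:=\{|u_k|\ge 1\}$ and $B_k:=\{|u_k|<1\}$, so that by $(f_1)$ with $\kappa=0$,
\[
\int_{\R^N}|f(u_k)u_k|\d x \le a_1\int_{A_k}|u_k|^{p}\d x + a_1\int_{B_k}|u_k|^{q}\d x.
\]
The first piece is subcritical ($2<p<2^{*}$), while on $B_k$ we use $|u_k|^{q}=|u_k|^{q-r}|u_k|^{r}\le|u_k|^{r}$ for any $r\in(2,q)$ (since $|u_k|^{q-r}\le 1$ on $B_k$). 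Choosing $r\in(2,2^{*})$ both pieces are thus controlled by $\|u_k\|_{L^{r}(\R^N)}^{r}$ for some exponent $r$ strictly between $2$ and $2^{*}$. So it suffices to prove that under the hypotheses $\|u_k\|_{L^{r}(\R^N)}\to 0$ for every $r\in(2,2^{*})$.

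For this I would run the standard Lions vanishing argument adapted to $D^{1,2}$. Cover $\R^N$ by balls $B_R(y_i)$ of fixed radius $R$ with uniformly bounded overlap. Interpolate on each ball: for $r\in(2,2^{*})$ write $\tfrac{1}{r}=\tfrac{\alpha}{2}+\tfrac{1-\alpha}{2^{*}}$ with $\alpha\in(0,1)$, so
\[
\|u_k\|_{L^{r}(B_R(y_i))}^{r}\le \|u_k\|_{L^{2}(B_R(y_i))}^{r\alpha}\|u_k\|_{L^{2^{*}}(B_R(y_i))}^{r(1-\alpha)}.
\]
Choose $r$ close enough to $2$ that $r\alpha\ge 2$; then
\[
\|u_k\|_{L^{r}(B_R(y_i))}^{r}\le \Bigl(\sup_{y\in\R^N}\int_{B_R(y)}|u_k|^{2}\Bigr)^{(r\alpha-2)/2}\|u_k\|_{L^{2}(B_R(y_i))}^{2}\|u_k\|_{L^{2^{*}}(B_R(y_i))}^{r(1-\alpha)}.
\]
Summing over $i$, applying H\"older with exponents $\tfrac{2}{2-r(1-\alpha)}$ and its conjugate to the product of the last two factors, and using the finite overlap together with the Sobolev embedding $D^{1,2}(\R^N)\hookrightarrow L^{2^{*}}(\R^N)$, one bounds the sum by a universal constant times $\bigl(\sup_{y}\int_{B_R(y)}|u_k|^{2}\bigr)^{(r\alpha-2)/2}\|u_k\|^{\,\mathrm{something}}$, which tends to zero by hypothesis and the boundedness of $(u_k)$ in $D^{1,2}$. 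For the remaining values $r\in(2,2^{*})$ one argues by another interpolation between this $r$ and $2^{*}$.

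Once $\|u_k\|_{L^{r}(\R^N)}\to 0$ for some $r\in(2,2^{*})$, choose in addition $r\in(2,\min\{p,q\})$; then both pieces in the initial splitting are dominated by $a_{1}\|u_k\|_{L^{r}(\R^N)}^{r}$ up to bounded factors, and we conclude $\int_{\R^N}f(u_k)u_k\to 0$. The main technical obstacle is the $D^{1,2}$ vanishing lemma, since we do not have a uniform $L^{2}$ bound on the whole space; the Gagliardo--Nirenberg interpolation on each ball plus the covering argument is the device that replaces the usual $H^{1}$-Lions lemma. Everything else is an immediate monotonicity estimate exploiting that $q>2^{*}>p$ together with the threshold $|u|=1$.
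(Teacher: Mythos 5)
Your argument has a genuine gap, and it is not a repairable technicality: the key claim you reduce everything to --- that a sequence bounded in $D^{1,2}(\rn)$ with $\sup_{y}\int_{B_R(y)}|u_k|^2\to 0$ must satisfy $\|u_k\|_{L^r(\rn)}\to 0$ for all $r\in(2,2^*)$ --- is \emph{false} in $D^{1,2}$. Take $U\in\cC_c^\infty(\rn)$, $U\neq 0$, and $u_k(x):=k^{-(N-2)/2}U(x/k)$. Then $\|u_k\|_{D^{1,2}}=\|U\|_{D^{1,2}}$ and $\int_{B_R(y)}|u_k|^2=k^2\int_{B_{R/k}(y/k)}|U|^2\leq C R^N k^{2-N}\to 0$ uniformly in $y$, yet $\|u_k\|_{L^r}^r=k^{N-r(N-2)/2}\|U\|_{L^r}^r\to\infty$ for every $r<2^*$. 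The classical Lions lemma genuinely needs the global $L^2$ bound of $H^1$, which is unavailable here, and your proof of the $D^{1,2}$ version breaks down exactly where one would expect: with pure Lebesgue interpolation $\frac1r=\frac\alpha2+\frac{1-\alpha}{2^*}$ one computes $r\alpha=\frac{2(2^*-r)}{2^*-2}$, which equals $2$ at $r=2$ and is \emph{strictly decreasing} in $r$, so the condition $r\alpha\geq 2$ you need in order to pull out a positive power of $\sup_y\int_{B_R(y)}|u_k|^2$ holds for no $r\in(2,2^*)$. (There is also a smaller slip at the end: on $A_k=\{|u_k|\geq1\}$ the bound $|u_k|^p\leq|u_k|^r$ requires $r\geq p$, so your final choice $r\in(2,\min\{p,q\})$ goes the wrong way; the correct single exponent would be $r\in[p,2^*)$ --- but by the counterexample above, even that reduction is lossy and cannot close the proof.)

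The lemma is nevertheless true, precisely because the hypothesis $(f_1)$ gives $|f(u)u|\leq a_1\min\{|u|^p,|u|^q\}$ with $p<2^*<q$, and this two-sided structure must be exploited rather than collapsed into a single subcritical norm (in the dilation example above, $\int_{\rn}\min\{|u_k|^p,|u_k|^q\}\to 0$ even though every subcritical norm blows up, because the amplitude tends to zero and the $q$-regime with $q>2^*$ takes over). A working argument treats the two regimes differently: the superlevel sets $\{|u_k|\geq\lambda\}$ have measure bounded via $\|u_k\|_{L^{2^*}}$, so truncations such as $(|u_k|-\lambda)^+$ are bounded in $H^1(\rn)$ (their supports have finite measure), inherit the vanishing property, and the \emph{classical} Lions lemma applies to them, killing $\int_{A_k}|u_k|^p$; the region $\{|u_k|\leq 1\}$ is then handled using $q>2^*$, e.g.\ by dyadic level decomposition, where the tail is uniformly small because $\sum_j 2^{-j(q-2^*)}<\infty$ and each fixed level set has vanishing measure by the truncation argument. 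This is the content of the cited result: the paper offers no proof of its own for this lemma and simply refers to \cite[Lemma 3.5]{cm}, where an argument adapted to the zero-mass setting along these lines is carried out. Your opening decomposition into $A_k$ and $B_k$ is the right first step; the missing idea is that the $D^{1,2}$ framework forbids the global subcritical-norm shortcut, and each piece needs its own mechanism.
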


\begin{proof}
See \cite[Lemma 3.5]{cm}.
\end{proof}

\begin{lemma} \label{lem:bl}
If $u_{k}\rh u$ weakly in $D^{1,2}(\rn)$ then, passing to a subsequence,
\begin{enumerate}
\item[$(a)$] $\irn|f(u_k)-f(u)||\vp| = o(1)$ \ for every $\vp\in\cC_c^{\infty}(\rn)$,
\item[$(b)$] $ \irn F(u_{k}) - \irn F(u_{k}-u) = \irn F(u)+o(1)$,
\item[$(c)$]$\irn f(u_k)u_k - \irn f(u_k - u)[u_k- u] = \irn f(u)u + o(1),$
\item[$(d)$] $f(u_{k})-f(u_{k}-u)\to f(u)$ in $(D^{1,2}(\rn))'$.
\end{enumerate}
\end{lemma}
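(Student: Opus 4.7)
The plan is to pass to a subsequence along which $u_k\to u$ almost everywhere on $\R^N$ and $u_k\to u$ in $L^r_{\mathrm{loc}}(\R^N)$ for every $r\in[1,2^*)$, and then to exploit the two-regime bound in $(f_1)$: since $p<2^*<q$, it yields the uniform critical-exponent growth $|f(s)|\le C|s|^{2^*-1}$ and $|F(s)|,\,|f(s)s|\le C|s|^{2^*}$ on all of $\R$. This makes all integrals in $(a)$--$(d)$ finite and sets the stage for dominated-convergence-type arguments at the critical exponent.

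For $(a)$ I would fix $\vp\in\cC_c^\infty(\R^N)$ and apply Vitali's convergence theorem on $K:=\supp\vp$. Continuity of $f$ gives $f(u_k)\to f(u)$ a.e.\ on $K$, while equi-integrability on $K$ follows by splitting $\{|u_k|\le 1\}$, where $|f(u_k)\vp|$ is uniformly bounded, from $\{|u_k|>1\}$, where $|f(u_k)|^\sigma\le a_1^\sigma|u_k|^{\sigma(p-1)}$ is bounded in $L^\sigma(K)$ for any $\sigma\in\big(1,\tfrac{2^*}{p-1}\big)$, via the Sobolev embedding applied to the $D^{1,2}$-bounded sequence $(u_k)$.

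For $(b)$, $(c)$, and $(d)$ the common engine is a pointwise Brezis--Lieb-type inequality: for each $G\in\{F,\ s\mapsto sf(s),\ f\}$ and every $\eps>0$ there is $C_\eps>0$ with
\[
|G(a+b)-G(a)-G(b)|\le \eps\bigl(|a|^{\mu}+|b|^{\mu}\bigr)+C_\eps\bigl(|b|^{\mu}+|b|^{\nu}\bigr),\qquad a,b\in\R,
\]
where $\mu=2^*$ for $G=F,\,sf(s)$ and $\mu=2^*-1$ for $G=f$ (and a suitable $\nu$). The proof is a case analysis over $|a|,|b|\lessgtr 1$ using the growth in $(f_1)$ and Young's inequality. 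Applied with $a=u_k-u$, $b=u$, and combined with a.e.\ convergence of the integrand to zero, boundedness of $\{u_k-u\}$ in $L^{2^*}$, and $u\in L^{2^*}$, it gives $(b)$ and $(c)$ after sending $k\to\infty$ first and $\eps\to 0^+$ afterwards. For $(d)$ the same inequality, raised to the exponent $(2^*)'$ (note that $(2^*-1)(2^*)'=2^*$, so the $\eps$-term becomes $\eps(|u_k-u|^{2^*}+|u|^{2^*})$), yields convergence in $L^{(2^*)'}(\R^N)$, which suffices thanks to the continuous embedding $L^{(2^*)'}(\R^N)\hookrightarrow (D^{1,2}(\R^N))'$.

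The principal obstacle is the non-power-type nonlinearity: the classical Brezis--Lieb lemma for $|\cdot|^p$ does not directly apply, so the systematic case-split at $|s|=1$ dictated by $(f_1)$ is essential throughout. The reason the argument closes is precisely the interplay $p<2^*<q$, which produces uniform critical-exponent majorants for $f$, $F$, and $sf(s)$ on all of $\R$ and matches the exponent of the natural Sobolev norm.
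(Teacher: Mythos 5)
Your proposal is sound, but note that the paper does not actually prove Lemma~\ref{lem:bl} at all: it simply cites \cite[Lemma 3.8]{cm}, so there is no internal argument to compare against, and what you have written is a legitimate self-contained substitute along the standard Brezis--Lieb lines that the cited lemma also follows. Your central observation is the right one: since $2<p<2^*<q$, the two-regime bound in $(f_1)$ collapses into the single critical majorants $|f(s)|\le a_1|s|^{2^*-1}$, $|f'(s)|\le a_1|s|^{2^*-2}$, $|F(s)|\le a_1|s|^{2^*}$ on all of $\r$ (each regime's bound is dominated by $|s|^{2^*-(\kappa+1)}$ on its half of $\{|s|\lessgtr 1\}$), and from $|f'(s)|\le a_1|s|^{2^*-2}$ the fundamental theorem of calculus plus Young's inequality yields exactly your $\eps$--$C_\eps$ pointwise inequality for $G\in\{F,\,sf(s),\,f\}$, after which the classical Brezis--Lieb truncation argument (dominated convergence on $\big(|G(u_k)-G(u_k-u)-G(u)|-\eps|u_k-u|^{\mu}\big)^+$, then $\eps\to 0^+$) closes $(b)$, $(c)$ and, via $(2^*-1)(2^*)'=2^*$ and $L^{(2^*)'}(\rn)\hookrightarrow (D^{1,2}(\rn))'$, also $(d)$; your Vitali argument for $(a)$ with $\sigma\in\big(1,\tfrac{2^*}{p-1}\big)$ is likewise correct. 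One small repair: the auxiliary exponent $\nu$ in your inequality must be such that $|u|^{\nu}\in L^1(\rn)$, and since $u\in D^{1,2}(\rn)$ guarantees only $u\in L^{2^*}(\rn)$, you should take $\nu=\mu$ (i.e.\ drop the $\nu$-term, which the critical majorants make redundant); keeping a genuinely different $\nu$, say $p$ or $q$ from the two-regime bound, would break the dominated-convergence step.
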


\begin{proof}
See \cite[Lemma 3.8]{cm}.
\end{proof}

\begin{lemma}\label{lem:orbits} 
For any given sequence $(y_k)$ in $\rn$, there exist a sequence $(\zeta_k)$ in $\rn$ and a closed subgroup $K$ of $G$ such that, up to a subsequence, the following statements hold true:
\begin{itemize}
\item[$(a)$] $\mathrm{dist}(Gy_k,\zeta_k)\leq C_0$ for all $k\in\n$ and some positive constant $C_0$.
\item[$(b)$] $G_{\zeta_k} = K$ for all $k\in\n$.
\item[$(c)$] If $|G/K| <\infty$ then $\lim\limits_{k\to\infty}|g\zeta_k-\bar g\zeta_k|=\infty$ for any pair $g,\bar g\in G$ with $\bar g g^{-1}\notin K$.
\item[$(d)$] If $|G/K|=\infty$ then, for each $n\in\n$, there exists $g_n\in G$ such that $\lim\limits_{k\to\infty}|g_{m}\zeta_k-g_{n}\zeta_k|=\infty$ for any $m,n\in\n$ with $m\neq n$.
\end{itemize}
\end{lemma}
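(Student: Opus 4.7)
I would split on the boundedness of $(y_k)$. If $(y_k)$ is bounded, I take $\zeta_k\equiv 0$ and $K:=G$; then $(a)$ holds with $C_0:=\sup_j|y_j|$, $(b)$ is trivial as $G_0=G=K$, $(c)$ is vacuous since $|G/K|=1$ (no pair $g,\bar g\in G$ satisfies $\bar g g^{-1}\notin G$), and $(d)$ does not apply. So after passing to a subsequence I may assume $|y_k|\to\infty$.

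Since $G\subseteq O(N)$ is a compact Lie group acting on the compact manifold $S^{N-1}$, the orbit-type theorem yields only finitely many conjugacy classes of isotropy subgroups $G_{y/|y|}$. Passing to a subsequence I may assume all $G_{y_k}$ are conjugate to a fixed closed subgroup $K_0\subseteq G$; choosing $h_k\in G$ with $h_k G_{y_k} h_k^{-1}=K_0$ and setting $\zeta_k^{(0)}:=h_k y_k\in Gy_k$ gives $G_{\zeta_k^{(0)}}\equiv K_0$ and $\mathrm{dist}(Gy_k,\zeta_k^{(0)})=0$.

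I then write $\zeta_k^{(0)}=r_kv_k$ with $r_k=|y_k|\to\infty$ and $v_k\in S^{N-1}$, extract a further subsequence so that $v_k\to v\in S^{N-1}$, and set $K:=G_v$ (which necessarily contains $K_0$) together with $\zeta_k:=r_k v$. This forces $G_{\zeta_k}\equiv K$, giving $(b)$. The price is that $\mathrm{dist}(Gy_k,\zeta_k)=r_k\,\mathrm{dist}_{S^{N-1}}(Gv_k,v)$, and the principal obstacle I anticipate is to show, by means of the slice theorem for the action of $G$ on $S^{N-1}$ around the orbit $Gv$ combined with a diagonal subsequence extraction (using that $v_k$ eventually lies in an arbitrarily small $G$-tube about $Gv$), that this quantity remains $O(1)$, which then yields $(a)$.

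For $(c)$ and $(d)$, the identity $|g\zeta_k-\bar g\zeta_k|=r_k|gv-\bar g v|$, arising from $\zeta_k=r_k v$ together with $G\subseteq O(N)$, reduces matters to showing $|gv-\bar g v|>0$ whenever $g,\bar g$ correspond to distinct $G$-orbit points of $v$, i.e.\ when $g^{-1}\bar g\notin G_v=K$, which is the hypothesis in the lemma (up to the conjugation between $\bar g g^{-1}$ and $g^{-1}\bar g$). Combined with $r_k\to\infty$ this yields the required divergence, proving $(c)$. For $(d)$, when $|G/K|=\infty$ I pick any countable family $(g_n)_{n\in\mathbb{N}}\subset G$ with pairwise distinct cosets $g_nK$; for $m\neq n$ one has $g_m^{-1}g_n\notin K$, and the same computation gives $|g_m\zeta_k-g_n\zeta_k|\to\infty$.
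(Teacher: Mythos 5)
Your opening moves are fine (the bounded case, and conjugating so that $G_{y_k}\equiv K_0$ using the finiteness of orbit types for the compact group $G$ acting on $S^{N-1}$), but the construction collapses exactly at the step you flag, and not merely because the estimate is left unproved: the estimate is \emph{false}, and the choice $K:=G_v$, $\zeta_k:=r_kv$ is wrong in general. Take $G\subset O(N)$ the order-two group generated by the reflection $\sigma(x_1,x_2,\ldots,x_N):=(-x_1,x_2,\ldots,x_N)$ and $y_k:=(\sqrt{k},k,0,\ldots,0)$. Then $G_{y_k}=\{1\}$ for all $k$, while $v_k\to v:=(0,1,0,\ldots,0)$ with $G_v=G$, so your recipe (on any subsequence) gives $K=G$ and $\zeta_k=r_kv$, $r_k=\sqrt{k^2+k}$; but $\mathrm{dist}(Gy_k,\zeta_k)\geq\sqrt{k}\to\infty$, so $(a)$ fails, and indeed $r_k\,\mathrm{dist}_{S^{N-1}}(Gv_k,v)\sim\sqrt{k}$. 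For this sequence the lemma holds with the opposite choice $\zeta_k:=y_k$, $K:=\{1\}$, since $|y_k-\sigma y_k|=2\sqrt{k}\to\infty$ gives $(c)$. The slice theorem only tells you which isotropy types occur near the orbit $Gv$; it gives no control of $\mathrm{dist}_{S^{N-1}}(Gv_k,v)$ relative to $1/r_k$, and it is precisely this rate --- not the limit isotropy $G_v$ --- that determines the correct $K$. So no tube/diagonal argument can rescue the step: the statement you would need is false.

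Note that the opposite fixed recipe fails as well: for $y_k:=(1,k,0,\ldots,0)$ one has $|y_k-\sigma y_k|\equiv 2$, so keeping $\zeta_k=y_k$, $K=\{1\}$ violates $(c)$, and one must move $\zeta_k$ into $\mathrm{Fix}(\sigma)$. Hence the missing idea is a \emph{dichotomy run subgroup by subgroup}: having arranged $G_{\zeta_k}\equiv K_0$, either $|\zeta_k-\gamma\zeta_k|\to\infty$ for all relevant $\gamma\notin K_0$ (then stop), or along a subsequence $|\zeta_k-\gamma\zeta_k|\leq C$ for some $\gamma\notin K_0$; in the latter case, since $\gamma-\mathrm{id}$ is invertible on $\mathrm{Fix}(\gamma)^{\perp}$, one gets $\mathrm{dist}(\zeta_k,\mathrm{Fix}(\gamma))\leq C'$, so $\zeta_k$ can be replaced at bounded cost by a nearby point whose isotropy group is strictly larger, and one iterates; the process terminates because there are only finitely many orbit types (equivalently, the dimension of the fixed-point sets strictly decreases), and $(a)$--$(d)$ hold at termination, with the representatives $g_n$ in $(d)$ chosen adapted to the sequence (automatic on your fixed ray, but not in general). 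This is the content of \cite[Lemma 3.2]{ccs}, which is all the paper's one-line proof invokes. Finally, your parenthetical ``up to conjugation'' in $(c)$ hides a real issue: $g\zeta_k=\bar g\zeta_k$ if and only if $g^{-1}\bar g\in K$, and for non-normal $K$ this is not equivalent to the stated condition $\bar g g^{-1}\notin K$; a complete proof must say explicitly which of the two conditions it verifies.
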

        
\begin{proof}
These statements follow from \cite[Lemma 3.2]{ccs}.
\end{proof}

The proof of the next statement follows \cite[Theorem 2.1]{cs}. We give the details for the sake of completeness.

\begin{proposition} \label{prop:compactness}
If 
$$c^{\phi} < |G/G_\xi|\,c^{\phi|G_\xi}\qquad\text{for every \ }\xi\in\rn \ \text{with} \ G_\xi\neq G,$$
then $c^{\phi}$ is attained by $J$ on $\cN^\phi$.
\end{proposition}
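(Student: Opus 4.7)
The plan is to run a concentration-compactness argument in the spirit of \cite[Theorem 2.1]{cs}, adapted to the zero-mass $D^{1,2}$-setting. I would start with a minimizing sequence $(u_k)\subset\cN^\phi$ for $c^\phi$. Assumption $(f_2)$ yields boundedness in $D^{1,2}(\rn)$, and Ekeland's variational principle on the natural constraint $\cN^\phi$ lets me assume $J'(u_k)\to 0$ in $(D^{1,2}(\rn)^\phi)'$. Since $\|u_k\|^2=\irn f(u_k)u_k$ is bounded away from $0$ (because $c^\phi>0$), Lemma \ref{lem:lions} rules out vanishing and supplies $\delta,R>0$ and $y_k\in\rn$ with $\int_{B_R(y_k)}|u_k|^2\geq\delta$.

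Next I would apply Lemma \ref{lem:orbits} to $(y_k)$ to obtain $(\zeta_k)$ and a closed subgroup $K\leq G$ with $G_{\zeta_k}=K$. Since $|u_k|$ is $G$-invariant and $\mathrm{dist}(Gy_k,\zeta_k)\leq C_0$, the mass survives the translation $v_k(x):=u_k(x+\zeta_k)$; for $h\in K$ the identity $h\zeta_k=\zeta_k$ gives $v_k(hx)=\phi(h)v_k(x)$, so $v_k\in D^{1,2}(\rn)^{\phi|K}$. Up to a subsequence, $v_k\rh v\neq 0$ weakly in $D^{1,2}(\rn)$ and strongly in $L^2_{\mathrm{loc}}$, with $v\in D^{1,2}(\rn)^{\phi|K}$. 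Passing to the limit in $J'(u_k)\varphi=o(1)$ via Lemma \ref{lem:bl}(a) and invoking the principle of symmetric criticality, $v$ solves \eqref{prob}; hence $v\in\cN^{\phi|K}$ and $J(v)\geq c^{\phi|K}$.

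The heart of the argument is a splitting identity along the $G$-orbit of $\zeta_k$. If $|G/K|=\infty$, Lemma \ref{lem:orbits}(d) provides infinitely many translates $g_n\zeta_k$ drifting pairwise apart; the $\phi$-equivariance of $u_k$ plants a bump of $L^2$-mass $\geq\delta$ around each, contradicting the $L^{2^*}$-bound. So $m:=|G/K|<\infty$. Choosing coset representatives $g_1,\dots,g_m$ and using Lemma \ref{lem:orbits}(c), I would iterate a Brezis-Lieb decomposition together with Lemma \ref{lem:bl}(b)--(c) on the mutually far-apart copies $\phi(g_i)\,v(g_i^{-1}(\cdot-g_i\zeta_k))$; since $F$ is even by $(f_3)$, each copy contributes exactly $J(v)$, and in the limit
$$c^\phi\;\geq\;m\,J(v)\;\geq\;|G/G_{\zeta_k}|\,c^{\phi|G_{\zeta_k}}.$$
The standing hypothesis then forces $K=G$; but then $\zeta_k$ is $G$-fixed, so $v$ is genuinely $\phi$-equivariant and $v\in\cN^\phi$, while weak lower semicontinuity together with Lemma \ref{lem:bl}(b) and $F\geq 0$ (which follows from $(f_2),(f_3)$) yields $J(v)\leq\liminf J(v_k)=c^\phi$. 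Hence $J(v)=c^\phi$ is attained.

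The main obstacle I foresee is the Brezis-Lieb splitting in this zero-mass context: because $f$ is not a pure power and $D^{1,2}$-elements decay only polynomially, I must be sure that the interaction terms among the $m$ translated and sign-reflected copies vanish as $|g_i\zeta_k-g_j\zeta_k|\to\infty$. Lemma \ref{lem:bl}(b)--(c) is tailored precisely for this and delivers the additivity of $\irn F$ and $\irn f(\cdot)\cdot$ up to $o(1)$; combined with the $G$-invariance of these integrals (from $(f_3)$), it closes the loop.
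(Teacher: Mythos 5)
Your overall strategy coincides with the paper's proof: both pass from vanishing-exclusion (Lemma \ref{lem:lions}) to a translation along the orbit via Lemma \ref{lem:orbits}, identify the limit $v$ as a $\phi|K$-equivariant solution, and then split the energy over far-apart copies $\phi(g_i)\,(v\circ g_i^{-1})(\cdot-g_i\zeta_k)$ using Lemma \ref{lem:bl}. Your disjoint-balls argument ruling out $|G/K|=\infty$ (pairwise disjoint balls each carrying $L^2$-mass $\delta$, contradicting the uniform $L^{2^*}$-bound) is a valid alternative to the paper's route, which instead obtains $c^\phi\geq n\,c^{\phi|K}$ for every $n$ from Lemma \ref{lem:orbits}$(d)$ and lets $n\to\infty$. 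However, there is a genuine gap at the two places where you convert the splitting into an energy inequality. Writing $r_k:=u_k-\sum_{i=1}^m\phi(g_i)\,(v\circ g_i^{-1})(\cdot-g_i\zeta_k)$, the decomposition yields
\begin{equation*}
J(u_k)=J(r_k)+m\,J(v)+o(1),
\end{equation*}
so your claim $c^\phi\geq m\,J(v)$ requires $\liminf_{k}J(r_k)\geq 0$, which you never establish. The justification you offer in the final step --- weak lower semicontinuity together with Lemma \ref{lem:bl}$(b)$ and $F\geq 0$ --- does not work: $F\geq0$ makes the term $-\irn F(r_k)$ in $J(r_k)$ \emph{nonpositive}, i.e.\ it pushes the inequality in the wrong direction, and $J$ is not weakly lower semicontinuous in this setting (mass of $\irn F$ escaping to infinity lowers the limit; $\liminf(a_k-b_k)$ cannot be controlled by combining $\liminf a_k$ with Fatou for $b_k$).

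The missing idea, which is exactly how the paper closes this step, is that the remainder inherits an asymptotic Nehari identity: since $u_k\in\cN^{\phi|K}$ and $v\in\cN^{\phi|K}$, the iterated norm splitting combined with the additivity of $\irn f(\cdot)\cdot$ from Lemma \ref{lem:bl}$(c)$ gives $\|r_k\|^2=\irn f(r_k)r_k+o(1)$. Then $(f_2)$ (with $(f_3)$ providing evenness) yields $\irn F(r_k)\leq\frac1\theta\irn f(r_k)r_k$, hence
\begin{equation*}
J(r_k)\geq\Big(\frac12-\frac1\theta\Big)\|r_k\|^2+o(1)\geq o(1).
\end{equation*}
With this inserted, your inequality $c^\phi\geq m\,J(v)\geq m\,c^{\phi|K}$ is justified, the hypothesis forces $K=G$, and the final identification $J(v)=c^\phi$ follows from the same splitting with $m=1$ (giving $c^\phi\geq J(v)\geq c^\phi$) rather than from lower semicontinuity; the rest of your argument then matches the paper.
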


\begin{proof}
Let $u_k\in\cN^{\phi}$ be such that $J(u_k)\to c^{\phi}$. Then,
$$\irn f(u_k)u_k=\|u_k\|^2\geq 2J(u_k)\geq c^\phi>0$$
for large enough $k$. Thus, after passing to a subsequence, Lemma \ref{lem:lions} yields $\delta>0$ and $y_k\in\rn$ such that
\begin{equation} \label{eq:nontrivial}
\int_{B_1(y_k)}|u_k|^2=\sup_{x\in\rn}\int_{B_1(x)}|u_k|^2\geq\delta\qquad\text{for all \ }k\in\n.
\end{equation}
For the sequence $(y_k)$ we choose a sequence $(\xi_{k})$ in $\rn$, a closed subgroup $K$ of $G$ and $C_0>0$ as in Lemma \ref{lem:orbits}, and we set $w_k(x):=u_k(x+\xi_k)$. Then, $w_k\in\cN^{\phi|K}$ and $J(w_k)\to c^{\phi}$. From assumption $(f_2)$ we get that $(w_k)$ is bounded in $D^{1,2}(\rn)$; see \cite[Lemma 3.6]{cm}. So, after passing to a subsequence, $w_k\rh w$ weakly in $D^{1,2}(\rn)$, $w_k\to w$ a.e. in $\rn$ and $w_k\to w$ strongly in $L^2_\mathrm{loc}(\rn)$. As $B_1(g_ky_k)\subset B_{C_0+1}(\xi_k)$ for some $g_k\in G$ and $|u_k|$ is $G$-invariant, \eqref{eq:nontrivial} yields
\begin{equation*}
\int_{B_{C_0+1}(0)}|w_k|^2=\int_{B_{C_0+1}(\xi_k)}|u_k|^2\geq\int_{B_1(g_ky_k)}|u_k|^2=\int_{B_1(y_k)}|u_k|^2\geq\delta\qquad\text{for all \ }k\in\n.
\end{equation*}
Therefore $w\neq 0$ and, using Ekeland's variational principle and Lemma \ref{lem:bl}, we see that $w$ solves \eqref{prob}. Note that, as each $w_k$ is $\phi|K$-equivariant, $w$ is also $\phi|K$-equivariant.

Let $g_1,\ldots,g_n\in G$ be such that $|g_j\xi_{k}-g_i\xi_{k}|\to\infty$ if $j\neq i$. Then, 
for each $j\in \{1,\dots,n\}$
$$\phi(g_j)\,(w_k\circ g_j^{-1})-\sum_{i=j+1}^n\phi(g_i)\,(w\circ g_i^{-1})(\,\cdot\,-g_i\xi_k+g_j\xi_k)\rh\phi(g_j)\,(w\circ g_j^{-1})$$
weakly in $D^{1,2}(\rn)$, where the sum is defined to be zero if $j=n$. It follows that
\begin{align*}
&\Big\|\phi(g_j)\,(w_k\circ g_j^{-1})-\sum_{i=j+1}^n\phi(g_i)\,(w\circ g_i^{-1})(\,\cdot\,-g_i\xi_k+g_j\xi_k)\Big\|^2\\
&=\Big\|\phi(g_j)\,(w_k\circ g_j^{-1})-\sum_{i=j}^n\phi(g_i)\,(w\circ g_i^{-1})(\,\cdot\,-g_i\xi_k+g_j\xi_k)\Big\|^2+\Big\|\phi(g_j)\,(w\circ g^{-1}_j)\Big\|^2+o(1).
\end{align*}
Since $u_k$ is $\phi$-equivariant, performing the change of variable $y=z-g_j\xi_k$ and recalling that $w_{k}(x)=u_{k}(x+\zeta_{k})$, we obtain
\begin{align*}
&\Big\|u_k-\sum_{i=j+1}^n\phi(g_i)\,(w\circ g_i^{-1})(\,\cdot\,-g_i\xi_k)\Big\|^2 \\
&=\Big\|u_k-\sum_{i=j}^n\phi(g_i)\,(w\circ g_i^{-1})(\,\cdot\,-g_i\xi_k)\Big\|^2+\|w\|^2+o(1),
\end{align*}
and iterating this identity for $j=1,\dots,n$, we deduce that
\begin{equation*}
\|u_k\|^2-\Big\|u_k-\sum_{i=1}^n\phi(g_i)\,(w\circ g_i^{-1})(\,\cdot\,-g_i\xi_k)\Big\|^2+n\|w\|^2+o(1).
\end{equation*}
Similarly, using Lemma \ref{lem:bl} we get
\begin{align*}
\irn F(u_k) &= \irn F\Big(u_k-\sum_{i=1}^n\phi(g_i)\,(w\circ g_i^{-1})(\,\cdot\,-g_i\xi_k)\Big)+n\irn F(w)+o(1), \\
\irn f(u_k)u_k &= \irn f\Big(u_k-\sum_{i=1}^n\phi(g_i)\,(w\circ g_i^{-1})(\,\cdot\,-g_i\xi_k)\Big)\Big[u_k-\sum_{i=1}^n\phi(g_i)\,(w\circ g_i^{-1})(\,\cdot\,-g_i\xi_k)\Big] \\
&\qquad+n\irn f(w)w+o(1).
\end{align*}
As $u_k,w\in\cN^{\phi|K}$ we derive
\begin{align*}
&\Big\|u_k-\sum_{i=1}^n\phi(g_i)\,(w\circ g_i^{-1})(\,\cdot\,-g_i\xi_k)\Big\|^2 \\
&=\irn f\Big(u_k-\sum_{i=1}^n\phi(g_i)\,(w\circ g_i^{-1})(\,\cdot\,-g_i\xi_k)\Big)\Big[u_k-\sum_{i=1}^n\phi(g_i)\,(w\circ g_i^{-1})(\,\cdot\,-g_i\xi_k)\Big],
\end{align*}
and using $(f_2)$ we obtain
$$c^\phi\geq\lim_{k\to\infty}J(u_k)\geq nJ(w)\geq nc^{\phi|K}.$$
Then, $|G/K|<\infty$ by Lemma \ref{lem:orbits}. If $K\neq G$, taking $n =|G/K|$ we get that $c^\phi\geq |G/K|c^{\phi|K}$, contradicting our assumption. Therefore, $K=G$, so $w\in\cN^\phi$ and $J(w)=c^\phi$, as claimed.
\end{proof}

\section{An upper bound for the energy of symmetric minimizers}
\label{sec:upperbound}

In \cite[Theorem 4]{bl} Berestycki and Lions established the existence of a ground state solution $\omega\in\mathcal{C}^{2}(\mathbb{R}^{N})$ to \eqref{prob}, which is positive, radially symmetric and decreasing in the radial direction. It satisfies the decay estimates
\begin{equation}\label{eq:decay}
\begin{split}
0<b_1(1+|x|)^{-(N-2)}\leq \omega(x) \leq b_2(1+|x|)^{-(N-2)},  
\\
|\nabla \omega(x)|\leq b_3(1+|x|)^{-(N-1)},  
\end{split}
\end{equation}
for all $x\in\rn$; see \cite[Theorem 1.1 and Corollary 1.2]{v}. 

From now on we consider the following symmetries.

\begin{example} \label{example}
We write $\rn\equiv\cc\times\cc\times\r^{N-4}$ and a point in $\rn$ as $(z_1,z_2,y)\in\cc\times\cc\times\r^{N-4}$. For $m\in\n$, let 
\[\z_m:=\{\mathrm{e}^{2\pi\mathrm{i}j/m}:j=0,\ldots,m-1\},\]
$G_m$ be the group generated by $\z_m\cup\{\tau\}$, acting on $\rn$ as 
\begin{align*}
\mathrm{e}^{2\pi\mathrm{i}j/m}(z_1,z_2,y):=(\mathrm{e}^{2\pi\mathrm{i}j/m}z_1,\mathrm{e}^{2\pi\mathrm{i}j/m}z_2,y), \qquad\qquad\tau(z_{1},z_{2},y):=(z_{2},z_{1},y),
\end{align*}
and $\phi:G_m\to\z_2$ be the homomorphism satisfying $\phi(\mathrm{e}^{2\pi\mathrm{i}j/m})=1$ and $\phi(\tau)=-1$. Set $\zeta:=(1,0,0)$, and for each $R>1$ define
\begin{equation*}
\widehat\sigma_{R}(x):=\sum_{g\in G_m}\phi(g)\,\omega(x-Rg\zeta),\qquad x\in\rn.
\end{equation*}
Note that $\widehat\sigma_{R}(gx)=\phi(g)\widehat\sigma_{R}(x)$ for every $g\in G_m$, $x\in\rn$. As in \emph{\cite[Lemmas 4.6 and 4.7]{cmp}} one shows that there exists $R_0>0$ and for each $R\geq R_0$ a unique \ $t_{R}>0$ \ such that
\begin{equation*}
\sigma_{R}:=t_{R}\widehat\sigma_{R}\in\cN^{\phi}.
\end{equation*}
Furthermore, $t_R\to 1$ as $R\to\infty$.
\end{example}

Our next goal is to prove the following result.

\begin{proposition}
\label{prop:upperbound}
If 
\begin{equation} \label{eq:m}
m\geq\sqrt{2}\pi\Big(\frac{\pi}{\sqrt{2}}\Big)^\frac{1}{N-3}
\end{equation}
then, for $R$ large enough,
$$c^{\phi} \leq J(\sigma_{R}) < 2mc_0,$$
where $c_0$ is the ground state energy of \eqref{prob}.
\end{proposition}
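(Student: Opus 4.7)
The lower bound $c^\phi\le J(\sigma_R)$ is immediate from $\sigma_R\in\cN^\phi$; the content is the strict upper bound $J(\sigma_R)<2mc_0$. The plan is to expand $J(\sigma_R)$ asymptotically as $R\to\infty$ up to the order of the leading pairwise interaction between the $2m$ translated copies of $\omega$, and to check that this interaction has the favourable sign precisely under \eqref{eq:m}.

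Write $\omega_g(x):=\omega(x-Rg\zeta)$, so $\widehat\sigma_R=\sum_{g\in G_m}\phi(g)\omega_g$. The quadratic part is exact,
\[
\|\widehat\sigma_R\|^2 = 2m\|\omega\|^2 + \sum_{g\neq h}\phi(g)\phi(h)\int f(\omega_g)\omega_h,
\]
where each $\langle\omega_g,\omega_h\rangle$ has been rewritten via $-\Delta\omega=f(\omega)$. I would then apply Lemma \ref{lem:acp2} to expand $\int F(\widehat\sigma_R)$ and $\int f(\widehat\sigma_R)\widehat\sigma_R$; the decisive feature of that lemma is that its non-additivity remainder is of strictly lower order than a single pairwise cross product $\int f(\omega_g)\omega_h$, and this is where the regularity threshold $\alpha>N/(2(N-2))$ of $(f_1)$ enters. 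Combined with the Nehari identity $\|\omega\|^2=\int f(\omega)\omega$, the expansions give
\[
J(\widehat\sigma_R)=2mc_0-\sum_{g<h}\phi(g)\phi(h)\int f(\omega_g)\omega_h+o(R^{-(N-2)}),
\]
together with $\|\widehat\sigma_R\|^2-\int f(\widehat\sigma_R)\widehat\sigma_R=O(R^{-(N-2)})$. The latter forces $t_R=1+O(R^{-(N-2)})$, hence by a one-parameter Taylor expansion of $t\mapsto J(t\widehat\sigma_R)$ at $t=1$ one has $J(\sigma_R)=J(\widehat\sigma_R)+O(R^{-2(N-2)})$, which is negligible at the order we are tracking.

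The final step is to read off the leading asymptotics of the cross products from Lemma \ref{lem:cm}, tailored to the polynomial decay \eqref{eq:decay}: $\int f(\omega(x))\omega(x-\eta)\d x=C_0|\eta|^{-(N-2)}+o(|\eta|^{-(N-2)})$ for some $C_0>0$, and to carry out the geometric count. The orbit $\{Rg\zeta:g\in G_m\}$ consists of two regular $m$-gons of radius $R$ in the orthogonal planes $\cc\times\{0\}\times\{0\}$ and $\{0\}\times\cc\times\{0\}$, carrying opposite $\phi$-signs. Every one of the $m^2$ opposite-sign pairs is at distance $R\sqrt{2}$, while the minimum same-sign distance is $2R\sin(\pi/m)$, attained by the $m$ nearest-neighbour pairs on each of the two $m$-gons. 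Dropping the (positive) non-nearest same-sign contributions, the interaction sum is bounded below by
\[
C_0 R^{-(N-2)}\!\left[\frac{2m}{(2\sin(\pi/m))^{N-2}}-\frac{m^2}{(\sqrt{2})^{N-2}}\right]+o(R^{-(N-2)}),
\]
and the elementary inequality $\sin(\pi/m)<\pi/m$ makes the bracket strictly positive as soon as $m^{N-3}\ge 2^{(N-4)/2}\pi^{N-2}$, which is precisely condition \eqref{eq:m}. So for such $m$ and $R$ large the conclusion follows.

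The main obstacle is the non-additivity step: the overlap of distant translated bumps is already of order $R^{-(N-2)}$, so both $\int F$ and $\int f(\cdot)\,\cdot$ of a sum of $2m$ bumps must be shown to deviate from the naive superposition by an error strictly smaller than this order, uniformly in $R$. This is the content of Lemma \ref{lem:acp2} and is the reason the sharper regularity threshold $\alpha>N/(2(N-2))$ is imposed, which in turn forces $N\ge 5$.
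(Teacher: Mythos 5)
Your proposal is correct and follows essentially the same route as the paper: the same signed two-$m$-gon ansatz, the same expansion using the non-additivity lemma (with the threshold $\alpha>\frac{N}{2(N-2)}$ making the remainders $o(R^{2-N})$) together with the interaction asymptotics, and the identical geometric count via $\sin(\pi/m)<\pi/m$ leading exactly to condition \eqref{eq:m}. The only deviations are minor: the paper absorbs the Nehari factor via Lemma \ref{lem:cmp} and the qualitative fact $t_R\to 1$, rather than your quantified $t_R=1+O(R^{2-N})$ plus a Taylor expansion (which also works, but needs the uniform nondegeneracy of the fibering map coming from $(f_2)$, which you assert without proof), and the exact pairwise asymptotics you invoke is Lemma \ref{lem:decay}, not Lemma \ref{lem:cm}, which only provides upper bounds.
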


We start with some lemmas. The first one is a refinement of \cite[Lemma 4.1]{cm}.

\begin{lemma} \label{lem:cm}
Let $y_1,\ldots,y_n$ be $n$ different points in $\rn$ and $\vartheta_1,\ldots,\vartheta_n$ be positive numbers with $\vartheta:=\vartheta_1+\cdots+\vartheta_n>N$. Then there exists $C=C(\vartheta_i, N)>0$ such that
\[\irn\prod_{i=1}^n(1+|x-Ry_i|)^{-\vartheta_i}\d x\leq Cd^{-\mu} R^{-\mu}\]
for all $R\geq1$, where $d:=\min\{|y_i-y_j|:i,j=1,\ldots,n, \ i\neq j\}$ and  $\mu:=\min\{\vartheta-\vartheta_i,\,\vartheta-N:i=1,\ldots,n\}$.
\end{lemma}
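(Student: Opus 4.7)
The plan is to combine a Voronoi-type decomposition of $\rn$ around the points $Ry_k$ with a near/far splitting of each cell. I set $V_k:=\{x\in\rn : |x-Ry_k|\leq|x-Ry_i|\text{ for all }i\}$. Since $|x-Ry_i|\geq|x-Ry_k|$ on $V_k$, the integrand is bounded there by $(1+|x-Ry_k|)^{-\vartheta}$, which is integrable since $\vartheta>N$. This already gives a uniform a priori bound $\int_\rn\prod_{i=1}^n(1+|x-Ry_i|)^{-\vartheta_i}\d x\leq C$, and because $(Rd)^{-\mu}\geq 1$ when $Rd\leq 1$, the desired inequality is trivial in that regime. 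From now on I assume $Rd\geq 1$.

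Under $Rd\geq 1$ I split $V_k=A_k\cup B_k$ with $A_k:=V_k\cap B_{Rd/2}(Ry_k)$ and $B_k:=V_k\setminus A_k$. On $A_k$ the triangle inequality $R|y_k-y_i|\leq|x-Ry_k|+|x-Ry_i|\leq 2|x-Ry_i|$ yields $|x-Ry_i|\geq Rd/2$ for every $i\neq k$, so the product of the factors with $i\neq k$ is bounded by $C(Rd)^{-(\vartheta-\vartheta_k)}$. Integrating the remaining factor $(1+|x-Ry_k|)^{-\vartheta_k}$ over $B_{Rd/2}(Ry_k)$ in polar coordinates gives $O((Rd)^{N-\vartheta_k})$ when $\vartheta_k<N$ and $O(1)$ when $\vartheta_k>N$, so the $A_k$-contribution is at most $C(Rd)^{-\min(\vartheta-\vartheta_k,\,\vartheta-N)}\leq C(Rd)^{-\mu}$. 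On $B_k$, the cellwise bound $\prod_i(1+|x-Ry_i|)^{-\vartheta_i}\leq(1+|x-Ry_k|)^{-\vartheta}$ together with the substitution $z=x-Ry_k$ give $\int_{B_k}\leq\int_{|z|\geq Rd/2}(1+|z|)^{-\vartheta}\d z\leq C(Rd)^{-(\vartheta-N)}\leq C(Rd)^{-\mu}$. Summing these contributions over $k=1,\dots,n$ and rewriting $(Rd)^{-\mu}=d^{-\mu}R^{-\mu}$ finishes the argument.

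The main delicate point is the borderline case $\vartheta_k=N$ for some $k$, in which integrating $(1+|z|)^{-N}$ over $B_{Rd/2}(0)$ produces a logarithmic factor in $Rd$. Since $\vartheta-\vartheta_k=\vartheta-N$ in that case, one has $\mu\leq\vartheta-N$; the logarithm is harmlessly absorbed into a power $(Rd)^{-\mu}$ whenever $\mu<\vartheta-N$ strictly, and the knife-edge subcase $\mu=\vartheta-N$ can be handled by mildly enlarging the splitting radius from $Rd/2$ to $(Rd)^{1-\eta}$ for a small $\eta>0$, at the cost of a slightly worse constant. This is the only step in an otherwise mechanical argument that requires care, and the situation does not arise in the applications of the lemma in this paper (where each $\vartheta_i=N-2$).
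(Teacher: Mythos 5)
Your core argument coincides with the paper's own proof: the paper uses exactly the same Voronoi cells $H_i:=\{z\in\rn:|z-Ry_j|\geq|z-Ry_i|\text{ for all }j\neq i\}$, the same splitting of each cell at radius $\rho R$ with $\rho=d/2$, the same bound $C\big[(\rho R)^{-(\vartheta-\vartheta_j)}+(\rho R)^{N-\vartheta}\big]$ on the inner ball, and the same crude bound $(1+|x-Ry_j|)^{-\vartheta}$ with a polar-coordinate estimate on the far part of the cell. Your explicit treatment of the regime $Rd\leq1$ via the uniform a priori bound is in fact a small repair of the paper's write-up: the final inequality $C\big[(\rho R)^{-(\vartheta-\vartheta_j)}+(\rho R)^{N-\vartheta}\big]\leq C(\rho R)^{-\mu}$ is only valid for $\rho R\geq1$ (for $\rho R<1$ the \emph{larger} exponent dominates), so the trivial-regime observation is genuinely needed.

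Your discussion of the borderline case $\vartheta_k=N$, however, contains an error in its knife-edge subcase $\mu=\vartheta-N$ (equivalently, $\max_i\vartheta_i=N$). Changing the splitting radius from $Rd/2$ to $(Rd)^{1-\eta}$ — which, incidentally, \emph{shrinks} it for large $Rd$ — does not remove the logarithm: on the smaller ball the inner integral is still of order $(1-\eta)\log(Rd)$ while the outer factors still only contribute $(Rd)^{-(\vartheta-N)}$, so you recover $C\log(Rd)\,(Rd)^{-\mu}$, not $C(Rd)^{-\mu}$. No fix can work, because the stated conclusion is actually false in this subcase: take $n=2$, $\vartheta_1=N$, $\vartheta_2=\eps\in(0,N)$, $y_1=0$, $y_2=e_1$ a unit vector, so that $d=1$ and $\mu=\eps$; then
\[
\irn(1+|x|)^{-N}(1+|x-Re_1|)^{-\eps}\d x\ \geq\ cR^{-\eps}\int_{B_R(0)}(1+|x|)^{-N}\d x\ \geq\ c'\,R^{-\eps}\log R,
\]
which exceeds $CR^{-\mu}$ for large $R$. (The paper's proof silently has the same gap: its bound $\int_{B_{\rho R}(0)}(1+|x|)^{-\vartheta_j}\d x\leq C\big[1+(\rho R)^{N-\vartheta_j}\big]$ fails when $\vartheta_j=N$.) The correct statement is that the lemma holds as written when no $\vartheta_i$ equals $N$, and with an extra factor $\log(2+Rd)$ otherwise; your observation that the logarithm is absorbed whenever $\mu<\vartheta-N$ strictly is right. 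Finally, your parenthetical that the applications only use $\vartheta_i=N-2$ is inaccurate: the proof of Proposition \ref{prop:upperbound} applies the lemma with exponents $(1+\frac\alpha2)(N-2)$ (two points) and $\alpha(N-2),\alpha(N-2),N-2$ (three points), and $(1+\frac\alpha2)(N-2)=N$ does occur for admissible parameters (e.g.\ $N=6$, $\alpha=1$). This remains harmless for the paper, since there one only needs $\mu_1,\mu_2>N-2$, and a factor $R^{-\mu}\log R$ with $\mu>N-2$ is still $o(R^{-(N-2)})$.
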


\begin{proof}
Set $\rho:=\frac{1}{2}d>0$ and
\[H_i:=\{z\in\rn:|z-Ry_j|\geq|z-Ry_i|\text{ for every }j\neq i\}.\]
Note that $B_{\rho R}(Ry_i)\subset H_i$ and $H_1\cup\cdots\cup H_n=\rn$. 

Henceforth $C$ will denote different positive constants depending on $\vartheta_i$ and $N$. If $|x-Ry_j|\leq\rho R,$ then $|x-Ry_i|\geq\rho R$ for every $i\neq j$. Thus, for each $j=1,\ldots,n$ we have
\begin{align*}
&\int_{B_{\rho R}(Ry_j)}\prod_{i=1}^n(1+|x-Ry_i|)^{-\vartheta_i}\d x
\leq \int_{B_{\rho R}(Ry_j)}\frac{\mathrm{d}x}{(1+|x-Ry_j|)^{\vartheta_j}(\rho R)^{\vartheta-\vartheta_j}}
\\
&\qquad=(\rho R)^{-(\vartheta-\vartheta_j)}\int_{B_{\rho R}(0)}\frac{\mathrm{d}x}{(1+|x|)^{\vartheta_j}}
\leq C\left[(\rho R)^{-(\vartheta-\vartheta_j)}+(\rho R)^{N-\vartheta}\right]\leq C(\rho R)^{-\mu}.
\end{align*}
Setting $x=Rz$ we obtain
\begin{align*}
&\int_{H_j\smallsetminus B_{\rho R}(Ry_j)}\prod_{i=1}^n(1+|x-Ry_i|)^{-\vartheta_i}\d x \leq \int_{H_j\smallsetminus B_{\rho R}(Ry_j)}\frac{\d x}{(1+|x-Ry_j|)^{\vartheta}} \\
&\qquad  \leq\int_{H_j\smallsetminus B_{\rho}(y_j)}\frac{R^N\d z}{(R|z-y_j|)^\vartheta}\leq CR^{N-\vartheta}\int_{\rho}^{+\infty}\frac{r^{N-1}}{r^\vartheta}\d r \leq C(\rho R)^{-\mu}.
\end{align*}
This completes the proof.
\end{proof}

\begin{lemma} \label{lem:decay}
There are positive constants $C_0$ and $\widehat{C}_0$ such that
\begin{align*}
\lim_{|y| \to \infty} |y|^{N-2} \int_{\mathbb{R}^N}f(\omega(x))\omega(x-y)\d x &=C_0 \\
\lim_{|y| \to \infty} |y|^{N-2} \int_{\mathbb{R}^N}|\omega(x)|^{2^*-1}\omega(x-y)\d x &=\widehat{C}_0.
\end{align*}
\end{lemma}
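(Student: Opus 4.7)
The plan is to first establish the sharp pointwise asymptotic $|x|^{N-2}\omega(x)\to C^{*}>0$ as $|x|\to\infty$, and then to plug it into the two integrals by a split-and-dominate argument.

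First I would observe that $f(\omega),\,\omega^{2^{*}-1}\in L^{1}(\rn)$. Indeed, by $(f_{1})$ and the upper bound in \eqref{eq:decay}, for $|x|$ large one has $|\omega(x)|\le 1$ and hence $|f(\omega(x))|\le a_{1}\omega(x)^{q-1}\le C(1+|x|)^{-(N-2)(q-1)}$; since $q>2^{*}$, $(N-2)(q-1)>N+2>N$. Likewise $\omega^{2^{*}-1}(x)\le b_{2}^{2^{*}-1}(1+|x|)^{-(N+2)}$. Writing $G(x)=c_{N}|x|^{-(N-2)}$ for the fundamental solution of $-\Delta$ in $\rn$, the decay of $\omega$ gives the representation $\omega(y)=\irn G(y-z)f(\omega(z))\,\d z$. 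Multiplying by $|y|^{N-2}$, splitting at $|z|=|y|/2$ (the outer piece handled by the same Lemma \ref{lem:cm} argument used below), dominated convergence yields $|y|^{N-2}\omega(y)\to C^{*}:=c_{N}\irn f(\omega(z))\,\d z>0$.

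Next, fix a large $M>0$ and split $I(y):=\irn f(\omega(x))\omega(x-y)\,\d x$ at $|x|=M$. On $\{|x|\le M\}$, $f(\omega)$ is bounded and $|y|^{N-2}\omega(x-y)\to C^{*}$ uniformly in $x$, with dominating bound $2^{N-2}b_{2}$ once $|y|\ge 2M$; dominated convergence gives $|y|^{N-2}\int_{|x|\le M}f(\omega(x))\omega(x-y)\,\d x\to C^{*}\int_{|x|\le M}f(\omega(x))\,\d x$. On $\{|x|>M\}$ I would split further into $\{M<|x|\le|y|/2\}$ and $\{|x|>|y|/2\}$. The first piece is controlled by $2^{N-2}b_{2}\int_{|x|>M}|f(\omega(x))|\,\d x$, using $|x-y|\ge|y|/2$. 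For the second, in that region $(1+|x|)^{-(N-2)(q-1)}\le(|y|/2)^{-(N-2)(q-2)}(1+|x|)^{-(N-2)}$, reducing matters to bounding $\irn(1+|x|)^{-(N-2)}(1+|x-y|)^{-(N-2)}\,\d x$, which Lemma \ref{lem:cm} (applied with $\vartheta_{1}=\vartheta_{2}=N-2$, valid because $N\ge 5$) bounds by $C|y|^{-(N-4)}$; the whole piece is then at most $C|y|^{2-(N-2)(q-2)}\to 0$, since $q>2^{*}$ forces $(N-2)(q-2)>2$. Letting $|y|\to\infty$ and then $M\to\infty$, using $f(\omega)\in L^{1}(\rn)$, I obtain $\lim_{|y|\to\infty}|y|^{N-2}I(y)=C^{*}\irn f(\omega)\,\d x=:C_{0}>0$.

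The second limit is obtained by running exactly the same scheme with $f(s)$ replaced by $|s|^{2^{*}-2}s$, using $\omega^{2^{*}-1}\in L^{1}(\rn)$ and the analogous tail estimate, which now hinges on $(N-2)(2^{*}-2)=4>2$; this produces $\widehat C_{0}=C^{*}\irn\omega^{2^{*}-1}\,\d x>0$. The main obstacle is the tail estimate on $\{|x|>|y|/2\}$: one must combine the supercritical decay of $f(\omega)$ (provided by $q>2^{*}$) with the slow polynomial decay of $\omega$ via Lemma \ref{lem:cm}, and the condition $N\ge 5$ is precisely what makes Lemma \ref{lem:cm} applicable to the two-centre product $(1+|x|)^{-(N-2)}(1+|x-y|)^{-(N-2)}$, whose joint exponent $2(N-2)$ must exceed $N$.
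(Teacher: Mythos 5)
Your proof is correct, but it follows a genuinely different route from the paper's on both halves of the argument. For the sharp asymptotic of the ground state, the paper does not use the Green representation at all: its Lemma \ref{lem:decay ground state} shows that $v(r):=1/(r^{N-2}\omega(r))$ satisfies $v''+b(r)v'+c(r)v=0$ with $c(r)=-f(\omega)/\omega<0$, so $v$ can have no interior local maximum; hence $v'$ eventually has a fixed sign, the bounded function $v$ is eventually monotone, and $t^{N-2}\omega(t)$ converges to some unidentified $c_0\in[c_1,c_2]$. Your Newtonian-potential derivation buys more --- it identifies $C^{*}=c_N\int_{\mathbb{R}^N} f(\omega)$, hence $C_0=c_N\bigl(\int_{\mathbb{R}^N} f(\omega)\bigr)^2$ --- at the cost of two steps you only wave at: (i) the representation $\omega=G*f(\omega)$ needs a line (the potential of $f(\omega)\in L^1$, which decays like $|z|^{-(q-1)(N-2)}$ with $(q-1)(N-2)>N+2$, tends to zero, so $\omega$ minus it is entire harmonic and vanishes at infinity, hence is zero by Liouville); and (ii) on $\{|z|>|y|/2\}$ the kernel $|y-z|^{-(N-2)}$ is singular at $z=y$, so Lemma \ref{lem:cm}, whose factors are $(1+|\cdot|)^{-\vartheta_i}$, does not literally apply; you must peel off $B_1(y)$, where $\int_{B_1(y)}|y-z|^{-(N-2)}f(\omega(z))\,\mathrm{d}z\leq C|y|^{-(q-1)(N-2)}$ beats the weight $|y|^{N-2}$ since $q>2$. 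For the integral limits, the paper runs a single dominated-convergence pass based on the claim $|y|^{N-2}\omega(x-y)\leq C$ from radial monotonicity; note that, as printed, the second case of that domination ($|y|\geq|y-x|$) has the monotonicity inequality reversed, and the uniform bound actually fails for $x$ near $y$ --- precisely the region your third zone $\{|x|>|y|/2\}$ handles via Lemma \ref{lem:cm} and $q>2^{*}$, so your longer three-zone bookkeeping is the more robust of the two. Your exponent checks are right ($\mu=N-4$ needs $N\geq5$; the tail powers $(q-2)(N-2)>4$ and $(2^{*}-2)(N-2)=4$ both exceed $2$); for positivity of $C_0$ and $\widehat{C}_0$ you should add that $(f_2)$ gives $f\geq0$ on $(0,\infty)$ with $f(\omega)\not\equiv0$, so $\int_{\mathbb{R}^N} f(\omega)>0$.
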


\begin{proof}
This is proved in Appendix A.
\end{proof}

\begin{lemma}\label{lem:cmp}
There exists $C_1>0$ such that
$$\Big|\irn(tf(u)-f(tu))v\Big|\leq C_1|t-1|\irn|u|^{2^*-1}|v|\qquad\text{for all }t\in[0,2], \ u,v\in D^{1,2}(\rn).$$
\end{lemma}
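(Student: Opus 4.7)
The plan is to first reduce the integral inequality to a pointwise inequality. Indeed, once one proves that
\[
|tf(u)-f(tu)|\leq C_1|t-1|\,|u|^{2^*-1}\qquad\text{for every }t\in[0,2],\ u\in\r,
\]
the statement follows immediately by taking absolute values inside the integral and multiplying by $|v|$. So the whole question is a uniform one-variable estimate.

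To obtain this pointwise bound, I would fix $u\in\r$ and let $\psi(t):=tf(u)-f(tu)$. Since $\psi(1)=0$ and $\psi'(t)=f(u)-uf'(tu)$, the mean value theorem yields
\[
|\psi(t)|\leq |t-1|\sup_{\xi\in[0,2]}\bigl|f(u)-uf'(\xi u)\bigr|.
\]
So it is enough to show that $|f(u)|\leq C|u|^{2^*-1}$ and $|uf'(\xi u)|\leq C|u|^{2^*-1}$ uniformly in $\xi\in[0,2]$, with $C$ depending only on $a_1,p,q,N$.

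Each of these bounds comes from splitting into the two regimes of $(f_1)$. For $|f(u)|$ this is the standard fact that since $2<p<2^*<q$, the bound $|f(u)|\leq a_1|u|^{p-1}$ on $\{|u|\geq 1\}$ and $|f(u)|\leq a_1|u|^{q-1}$ on $\{|u|\leq 1\}$ glue into $|f(u)|\leq C|u|^{2^*-1}$ on all of $\r$. For $|uf'(\xi u)|$ one writes, using $(f_1)$ with $\kappa=1$,
\[
|uf'(\xi u)|\leq a_1|u|\cdot|\xi u|^{p-2}=a_1(\xi|u|)^{p-2}|u|\quad\text{if }|\xi u|\geq 1,
\]
and
\[
|uf'(\xi u)|\leq a_1|u|\cdot|\xi u|^{q-2}=a_1(\xi|u|)^{q-2}|u|\quad\text{if }|\xi u|\leq 1.
\]
In the first regime $(\xi|u|)^{p-2}\leq (\xi|u|)^{2^*-2}$ because $p<2^*$ and $\xi|u|\geq 1$; in the second regime $(\xi|u|)^{q-2}\leq (\xi|u|)^{2^*-2}$ because $q>2^*$ and $\xi|u|\leq 1$. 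In both regimes this gives $|uf'(\xi u)|\leq a_1\xi^{2^*-2}|u|^{2^*-1}\leq a_1 2^{2^*-2}|u|^{2^*-1}$.

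The only subtlety I expect is precisely this last switch: one has to be careful that the small-$|\xi u|$ regime occurs even for large $|u|$ when $\xi$ is small, so one must exploit the interpolation $q>2^*>p$ in the right direction in each case rather than bounding $|f'|$ trivially. Once those two pointwise estimates are secured, combining them with the mean value identity and integrating against $|v|$ gives the lemma with $C_1=2a_1 2^{2^*-2}$ (up to a harmless constant).
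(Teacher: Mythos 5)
Your proof is correct and is essentially the paper's own argument: fix $u$, set $h(t):=tf(u)-f(tu)$, use $(f_1)$ to get the uniform bound $|h'(t)|=|f(u)-uf'(tu)|\leq C_1|u|^{2^*-1}$ on $[0,2]$, and conclude by the mean value theorem since $h(1)=0$, then integrate the pointwise estimate against $|v|$. The only difference is that you spell out the two-regime interpolation (using $p<2^*<q$) behind the bound on $|h'|$, which the paper states without detail; your case analysis, including the point that small $|\xi u|$ can occur for large $|u|$ when $\xi$ is small, is accurate.
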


\begin{proof}
Fix $u\in\r$ and let $h(t):=tf(u)-f(tu)$. It follows from $(f_1)$ that $|h'(t)|\leq C_1|u|^{2^*-1}$ for all $t\in[0,2]$. Hence, $|tf(u)-f(tu)|=|h(t)-h(1)|\leq C_1|u|^{2^*-1}|t-1|$, which implies the conclusion.
\end{proof}

\begin{lemma}\label{lem:acp}
Given $m\in\n$ and $\bar u>0$, there exists $C_2>0$ such that
$$
\left|F\Big(\sum_{i=1}^{2m}u_i\Big)-\sum_{i=1}^{2m}F(u_i)-\sum_{\substack{i,j=1 \\ i\neq j}}^{2m}f(u_i)u_j\right|\leq C_2\left(\sum_{\substack{i,j=1 \\ i<j}}^{2m}|u_iu_j|^{1+\frac\alpha2} + \sum_{\substack{i,j,k=1 \\ i<j<k}}^{2m}|u_iu_j|^\alpha|u_k| \right),
$$ 
for any $u_1,\ldots,u_{2m}\in[-\bar u,\bar u]$.
\end{lemma}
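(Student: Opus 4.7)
The plan is to proceed by induction on the number of summands $n=2m$, with a sharp two-term identity as the main ingredient. For the base case $n=2$, I would use the fundamental theorem of calculus twice to derive
\[
F(u_1+u_2)-F(u_1)-F(u_2)-f(u_1)u_2-f(u_2)u_1=\int_0^1\!\!\int_0^1\bigl[f'(tu_1+su_2)-f'(tu_1)-f'(su_2)\bigr]u_1u_2\,ds\,dt,
\]
where $F(0)=f(0)=f'(0)=0$ thanks to $(f_1)$ and $q>2$. Assumption $(f_1)$ makes $f'$ Hölder continuous of exponent $\alpha$ on the compact interval $[-n\bar u,n\bar u]$, with some constant $L>0$. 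The integrand can then be controlled by a symmetric Hölder trick: writing it as $[f'(tu_1+su_2)-f'(tu_1)]-[f'(su_2)-f'(0)]$ gives the bound $2L|u_2|^\alpha$, while writing it the other way gives $2L|u_1|^\alpha$. Taking the minimum and using $\min(|u_1|,|u_2|)^\alpha\le(|u_1||u_2|)^{\alpha/2}$ yields $2L|u_1u_2|^{\alpha/2}$, which multiplied by the factor $u_1u_2$ inside the double integral gives the base case with remainder bounded by $C|u_1u_2|^{1+\alpha/2}$.

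The same trick applied to the identity $f(u+v)-f(u)-f(v)=\int_0^1[f'(tu+v)-f'(tu)]u\,dt$ yields the auxiliary estimate $|f(u_1+u_2)-f(u_1)-f(u_2)|\le C|u_1u_2|^{(1+\alpha)/2}$. For the inductive step I would set $T:=u_1+\cdots+u_{n-1}$ and $S:=T+u_n$, apply the two-term identity to the pair $(T,u_n)$ so that $F(S)=F(T)+F(u_n)+f(T)u_n+f(u_n)T+R_1$ with $|R_1|\le C|Tu_n|^{1+\alpha/2}$, then substitute the inductive expansion of $F(T)$ and the iterated version of the auxiliary estimate $f(T)=\sum_{i<n}f(u_i)+R_3$, with $|R_3|\le C\sum_{i<j<n}|u_iu_j|^{(1+\alpha)/2}$. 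All the linear cross-terms $f(u_i)u_j$ indexed by $\{i,j\}\subset\{1,\ldots,n\}$ then assemble correctly, and three bookkeeping estimates convert the remainders into the claimed shape: (i) $|Tu_n|^{1+\alpha/2}\le C\sum_{i<n}|u_iu_n|^{1+\alpha/2}$ via the convexity-type inequality $(\sum a_i)^\beta\le C_n\sum a_i^\beta$ for $\beta=1+\alpha/2\in(1,3/2]$; (ii) $|u_iu_j|^{(1+\alpha)/2}\le\bar u^{\,1-\alpha}|u_iu_j|^\alpha$, so that $|R_3\,u_n|$ becomes the needed piece of the triple sum with $k=n$; (iii) adding the pair contributions from $R_1$ and the inductive remainder amounts to exactly $\sum_{i<j\le n}|u_iu_j|^{1+\alpha/2}+\sum_{i<j<k\le n}|u_iu_j|^\alpha|u_k|$.

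The main obstacle is obtaining the sharp \emph{symmetric} geometric-mean bound $(|u_1||u_2|)^{\alpha/2}$ on the mixed second difference of $f'$ in the base case: a one-sided Hölder estimate of the form $|u_j|^\alpha$ would not be symmetric in $(u_1,u_2)$ and would be too crude to fit inside the claimed pair sum $|u_1u_2|^{1+\alpha/2}$. Once this sharp estimate and its counterpart for $f$ are in hand, the induction is routine combinatorics, provided one carefully keeps the exponent $1+\alpha/2$ inside pair remainders and the exponent $\alpha$ (paired with an extra factor $|u_k|$) inside triple remainders.
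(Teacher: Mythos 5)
Your proposal is correct and follows essentially the same route as the paper's proof (via Lemma \ref{lem:acp2} in Appendix B): your double-integral identity is precisely the paper's integration of the mixed partial $\partial_u\partial_v G(u,v)=f'(u+v)-f'(u)-f'(v)$ of $G(u,v):=F(u+v)-F(u)-F(v)-f(u)v-f(v)u$, bounded by the same $\min\{|u|,|v|\}^{\alpha}$ Hölder trick and geometric-mean inequality to get $|uv|^{1+\frac{\alpha}{2}}$, followed by the same induction using an additivity estimate for $f$ itself (the paper's Lemma \ref{lem:f}). The only cosmetic difference is that your auxiliary estimate for $f$ carries the exponent $\frac{1+\alpha}{2}$ instead of the paper's $\alpha$, which you correctly reconcile via $|u_iu_j|^{\frac{1+\alpha}{2}}\leq \bar u^{\,1-\alpha}|u_iu_j|^{\alpha}$.
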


\begin{proof}
This is proved in Appendix B.
\end{proof}

\begin{proof}[Proof of Proposition~\ref{prop:upperbound}]
We write the $G_m$-orbit of $\zeta=(1,0,0)$ as
\[G_m\zeta=\left\{\zeta_{1},\dots,\zeta_{2m}\right\}\quad\text{with}\quad\zeta_i:=\mathrm{e}^{2\pi\mathrm{i}(i-1)/m}\zeta\quad\text{and}\quad\zeta_{m+i}:=\tau\zeta_i,\quad i=1,\ldots,m,\]
and set
\begin{equation*}
\omega_{iR}(x):=
\begin{cases}
\omega(x-R\zeta_i) &\text{for \ }i=1,\ldots,m,\\
-\omega(x-R\zeta_i) &\text{for \ }i=m+1,\ldots,2m.
\end{cases}
\end{equation*}
Then, $\sigma_R=\sum_{i=1}^{2m}t_R\omega_{iR}$; see Example \ref{example}. Note that $J(\omega_{iR})=J(\omega)$. As $\omega_{iR}$ solves \eqref{prob}, from Lemmas \ref{lem:cmp} and \ref{lem:decay} we derive
\begin{align*}
\langle t_R\omega_{iR},t_R\omega_{jR}\rangle &=t_R^2\irn f(\omega_{iR})\omega_{jR}\leq \irn f(t_R\omega_{iR})t_R\omega_{jR}+C|t_R-1|\irn |\omega_{iR}|^{2^*-1}|\omega_{jR}| \\
&\leq \irn f(t_R\omega_{iR})t_R\omega_{jR}+C|t_R-1||\zeta_i-\zeta_j|^{2-N}R^{2-N},
\end{align*}
for $R$ large enough. Using this inequality and Lemmas \ref{lem:acp} and \ref{lem:cm} we obtain
\begin{align} \label{eq:Jsigma}
J(\sigma_{R})=&\frac{1}{2}\Big\|\sum_{i=1}^{2m}t_R\omega_{iR} \Big\|^2-\irn F\Big(\sum_{i=1}^{2m}t_R\omega_{iR}\Big) \nonumber\\
=&\sum_{i=1}^{2m}\frac{1}{2}\left\|t_R\omega_{iR} \right\|^2 - \sum_{i=1}^{2m}\irn F(t_R\omega_{iR}) + \frac{1}{2}\sum_{\substack{i,j=1 \\ i\neq j}}^{2m}\langle t_R\omega_{iR},t_R\omega_{jR}\rangle \nonumber\\
& - \irn F\left(\sum_{i=1}^{2m}t_R\omega_{iR}\right) + \sum_{i=1}^{2m}\irn F(t_R\omega_{iR}) \nonumber\\
\leq & \sum_{i=1}^{2m}J(t_R\omega_{iR}) - \frac{t_R^2}{2}\sum_{\substack{i,j=1 \\ i\neq j}}^{2m}\irn f(\omega_{iR})\omega_{jR} + C|t_R-1|\sum_{\substack{i,j=1 \\ i\neq j}}^{2m}|\zeta_i-\zeta_j|^{2-N}R^{2-N} \nonumber\\
& - \irn\Big(F\Big(\sum_{i=1}^{2m}t_R\omega_{iR}\Big) - \sum_{i=1}^{2m} F(t_R\omega_{iR}) - \sum_{\substack{i,j=1 \\ i\neq j}}^{2m}f(t_R\omega_{iR})t_R\omega_{jR}\Big) \nonumber\\
\leq & 2mJ(\omega) - \frac{t_R^2}{2}\sum_{\substack{i,j=1 \\ i\neq j}}^{2m}\irn f(\omega_{iR})\omega_{jR} + C|t_R-1|R^{2-N} \nonumber\\
& + C\sum_{\substack{i,j=1 \\ i<j}}^{2m}\irn|\omega_{iR}\omega_{jR}|^{1+\frac{\alpha}{2}} +  C\sum_{\substack{i,j,k=1 \\ i<j<k}}^{2m}\irn|\omega_{iR}\omega_{jR}|^\alpha|\omega_{kR}| \nonumber\\
\leq & 2mc_0 - \frac{t_R^2}{2}\sum_{\substack{i,j=1 \\ i\neq j}}^{2m}\irn f(\omega_{iR})\omega_{jR} + C|t_R-1|R^{2-N} + CR^{-\mu_1} + CR^{-\mu_2},
\end{align}
where $C$ stands for different positive constants that do not depend on $R$, $\mu_1:=\min\{(1+\frac{\alpha}{2})(N-2),\,(2+\alpha)(N-2)-N\}$ and $\mu_2:=\min\{(\alpha+1)(N-2),\, 2\alpha(N-2),\,(2\alpha+1)(N-2)-N\}$. Since, by assumption $(f_1)$, $\alpha>\frac{N}{2(N-2)}$, we have that $\mu_1,\mu_2>N-2$.

Next, we estimate the sign of the second summand in the last row. To this end, set \ $d_{ij}:=|\zeta_{i}-\zeta_{j}|$ for $i\neq j$. \ Note that $d_{12}=2\sin(\frac{\pi}{m})$, and $d_{ij}=\sqrt{2}$ if $1\leq i\leq m<j\leq 2m$. Therefore,
\begin{align*}
&\sum_{\substack{i,j=1 \\ i\neq j}}^md_{ij}^{2-N}-\sum_{i=1}^m\sum_{j=m+1}^{2m}d_{ij}^{2-N} \geq \sum_{i=1}^m(d_{i\,(i+1)}^{2-N}+d_{(i-1)\,i}^{2-N}) - \sum_{i=1}^m\sum_{j=m+1}^{2m}d_{ij}^{2-N} \\
&=2m\Big(2\sin\Big(\frac{\pi}{m}\Big)\Big)^{2-N}-m^2(\sqrt{2}\,)^{2-N} > m\Big[2\Big(\frac{2\pi}{m}\Big)^{2-N}-m(\sqrt{2}\,)^{2-N}\Big]\geq 0
\end{align*}
because, by assumption,
\begin{equation*}
m\geq \sqrt{2}\pi\Big(\frac{\pi}{\sqrt{2}}\Big)^\frac{1}{N-3}.
\end{equation*}
Let $C_0$ be as in Lemma \ref{lem:decay} and fix $\eps\in(0,C_0)$ such that
\begin{align*}
M_0:=& 2(C_0-\eps)\sum_{\substack{i,j=1 \\ i\neq j}}^md_{ij}^{2-N} - 2(C_0+\eps)\sum_{i=1}^m\sum_{j=m+1}^{2m}d_{ij}^{2-N}>0.
\end{align*}
Then, for $R$ large enough we have that
\begin{align*}
\sum_{\substack{i,j=1 \\ i\neq j}}^{2m}\irn f(\omega_{iR})\omega_{jR}&=2\sum_{\substack{i,j=1 \\ i\neq j}}^m\irn f(\omega)\omega(\,\cdot\,-R(\zeta_j-\zeta_i)) \\
&\quad-2\sum_{i=1}^m\sum_{j=m+1}^{2m}\irn f(\omega)\omega(\,\cdot\,-R(\zeta_j-\zeta_i))\geq M_0R^{2-N},
\end{align*}
and we derive from \eqref{eq:Jsigma} that
$$J(\sigma_R)\leq 2mc_0 - \frac{t_R^2}{2}M_0R^{2-N} + C|t_R-1|R^{2-N} + o(R^{2-N}).$$
Since $M_0>0$ and $t_R\to 1$ as $R\to\infty$, we conclude that $J(\sigma_{R}) < 2mc_0$ for $R$ large enough, as claimed.
\end{proof}

\begin{remark} \label{rem:m}
The function $\psi(t):=\sqrt{2}\pi\left(\frac{\pi}{\sqrt{2}}\right)^\frac{1}{t}$ is decreasing in $t>0$. Since $\psi(t)\to\sqrt{2}\pi$ as $t\to\infty$ and $\sqrt{2}\pi>4$, any number $m$ satisfying \eqref{eq:m} must be greater than or equal to $5$. Direct computation shows that the least integer greater than or equal to $\sqrt{2}\pi\left(\frac{\pi}{\sqrt{2}}\right)^\frac{1}{N-3}$ is $6$ if $N=5,6$, and it is $5$ if $N\geq 7$.
\end{remark}
\medskip

\begin{proof}[Proof of Theorem \ref{thm:main}]
Let $m$ satisfy \eqref{eq:m}. Take $G:=G_m$ and $\phi$ as in Example \ref{example}. For $\xi=(z_1,z_2,y)\in\cc\times\cc\times\r^{N-4}$ we have that $|G\xi|=2m$ and $G_\xi=\{1\}$ if $z_1\neq z_2$, $|G\xi|=m$ and $G_\xi=\{1,\tau\}$ if $z_1=z_2\neq 0$, and $G_\xi=G$ if $z_1=z_2=0$. So, according to Proposition \ref{prop:compactness}, $c^\phi$ is attained if $c^\phi<\min\{2mc_0,mc^{\phi|\{1,\tau\}}\}$. 

If $u\in\cN^{\phi|\{1,\tau\}}$ then $u$ changes sign and $u^-(x)=-u^+(\tau x)$ where $u^+:=\max\{u,0\}$ and $u^-:=\min\{u,0\}$. Therefore,
$$\|u\|^2=2\|u^\pm\|^2,\qquad\irn F(u)=2\irn F(u^\pm),\qquad\irn f(u)u=2\irn f(u^\pm)u^\pm,$$
and, as a consequence, 
$$u^\pm\in\cN:=\{v\in D^{1,2}(\rn):v\neq 0\text{ \ and \ }\|v\|^2=\irn F(v)\}$$ 
and $J(u)=2J(u^\pm)\geq 2c_0$. It follows that \ $\min\{2mc_0,mc^{\phi|\{1,\tau\}}\}=2mc_0$.
 
Proposition \ref{prop:upperbound} asserts that $2mc_0>c^\phi$. Therefore $c^\phi$ is attained at some $\overline{\omega}\in\cN^\phi$. Furthermore, as $\cN^{\phi}\subset\cN^{\phi|\{1,\tau\}}$, we have that $c^{\phi|\{1,\tau\}}\leq c^\phi$. If $J(\overline{\omega})=2c_0$, then $J(\overline{\omega}^+)=c_0$, contradicting the maximum principle. This shows that $c^\phi>2c_0$.

As $\overline{\omega}\in\cN^\phi$, it has properties $(a)$ and $(b)$. Property $(c)$ holds true after a suitable translation. Indeed, if $a_i\in\r$ and $\varrho_i:\rn\to\rn$ is the reflection on the hyperplane $\{(z_1,z_2,y_1,\ldots,y_{N-4}):y_i=a_i\}$, the function
\begin{equation*}
w(z_1,z_2,y):=
\begin{cases}
\overline{\omega}(z_1,z_2,y) &\text{if \ }y_i>a_i,\\
(\overline{\omega}\circ\varrho_i)(z_1,z_2,y) &\text{if \ }y_i<a_i,
\end{cases}
\end{equation*}
is also in $\cN^\phi$ and $J(w)=c^\phi$. Applying Lopes' method \cite{l} one shows that there exists $a=(a_1,\ldots,a_{N-4})$ such that $\overline{\omega}$ is invariant under the reflection on every hyperplane $y_i=a_i$. It follows that $\widehat{\omega}(z_1,z_2,y):=\overline{\omega}(z_1,z_2,y+a)$ belongs to $\cN^\phi$ and satisfies $(c)$ and $J(u)=c^\phi$.

Remark \ref{rem:m} completes the proof.
\end{proof}

\appendix 

\section{ The proof of Lemma \ref{lem:decay}}

We start by establishing the exact decay of the ground state.

\begin{lemma} \label{lem:decay ground state}
Let $\omega\in\cC^2(\rn)$ be a positive radial ground state solution to \eqref{prob}. Then there exists $c_0>0$ such that 
$$\lim_{t\to\infty}t^{N-2}\omega(t)=c_0.$$
\end{lemma}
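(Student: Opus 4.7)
The plan is to exploit the radial structure of $\omega$ to reduce \eqref{prob} to a first order ODE that can be integrated explicitly. Since $\omega\in\cC^2$ and is radial (so $\omega'(0)=0$), the equation $-\Delta\omega=f(\omega)$ reads $(r^{N-1}\omega'(r))' = -r^{N-1}f(\omega(r))$, which integrates from $0$ to $r$ to give
\[
r^{N-1}\omega'(r) = -\int_0^r s^{N-1}f(\omega(s))\,\d s.
\]

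The first step will be to verify that
\[
A := \int_0^\infty s^{N-1}f(\omega(s))\,\d s \in (0,\infty).
\]
Finiteness follows from \eqref{eq:decay} combined with $(f_1)$: for $s$ sufficiently large one has $\omega(s)\leq 1$, hence $|f(\omega(s))|\leq a_1\omega(s)^{q-1}\leq C(1+s)^{-(N-2)(q-1)}$, and $q>2^*$ forces $(N-2)(q-1)>N+2$, so $s^{N-1}f(\omega(s))$ is integrable at infinity; on bounded intervals it is bounded by continuity. Positivity will follow by a Liouville-type argument: if $f(\omega)\equiv 0$, then $\omega$ would be harmonic on $\R^N$ and vanish at infinity by \eqref{eq:decay}, forcing $\omega\equiv 0$ and contradicting that $\omega$ is a nontrivial ground state.

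Granting this, $r^{N-1}\omega'(r)\to -A$ as $r\to\infty$. Writing $r^{N-1}\omega'(r) = -A+\eta(r)$ with $\eta(r)\to 0$, I would integrate the expression $\omega'(r) = -Ar^{-(N-1)} + \eta(r)r^{-(N-1)}$ from $r$ to $\infty$, using $\omega(r)\to 0$ from \eqref{eq:decay}, to obtain
\[
\omega(r) = \frac{A}{N-2}\,r^{-(N-2)} - \int_r^\infty \eta(s)\,s^{-(N-1)}\,\d s.
\]
Multiplying by $r^{N-2}$ and noting that the tail is controlled by $(N-2)^{-1}\sup_{s\geq r}|\eta(s)|\to 0$, the desired conclusion
\[
\lim_{r\to\infty}r^{N-2}\omega(r) = \frac{A}{N-2} =: c_0 > 0
\]
follows immediately.

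The only real technical point, and what I view as the main obstacle, is the integrability bound on $s^{N-1}f(\omega(s))$; this is where the supercritical exponent $q>2^*$ from $(f_1)$ is crucially used, combined with the polynomial decay \eqref{eq:decay}. Everything else in the argument amounts to integrating a first order ODE whose right-hand side is known to be absolutely integrable against $s^{N-1}$.
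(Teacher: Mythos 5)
Your proof is correct, but it takes a genuinely different route from the paper's. The paper never integrates the equation: it observes that \eqref{eq:decay} traps $r^{N-2}\omega(r)$ between two positive constants, and then proves that this quantity is eventually monotone by showing that $v(r):=\bigl(r^{N-2}\omega(r)\bigr)^{-1}$ solves $v''+b(r)v'+c(r)v=0$ with $c(r)=-f(\omega)/\omega<0$, so $v$ cannot have a local maximum; boundedness plus eventual monotonicity then give a positive limit, with no formula for $c_0$. You instead integrate the radial ODE twice, which is more quantitative: it identifies the limit explicitly as $c_0=\frac{1}{N-2}\int_0^\infty s^{N-1}f(\omega(s))\,\d s$ (so $\omega$ matches the fundamental solution of $-\Delta$ with mass $\int_{\rn}f(\omega)$), and it only uses the \emph{upper} bound in \eqref{eq:decay} (for integrability and $\omega(r)\to 0$), whereas the paper's argument also needs the lower bound $\omega(x)\geq b_1(1+|x|)^{2-N}$ — in your argument the strict positivity of the limit is an output, not an input. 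One small point you should make explicit in the positivity step: to pass from $A=0$ to $f(\omega)\equiv 0$ the integrand must have a sign; this follows from $(f_2)$, which yields $f(s)\geq 0$ for all $s>0$ (in fact $f(s)>0$, since $f'(s)s^2>f(s)s\geq 0$ and $f(0)=0$ by $(f_1)$), after which $A>0$ is immediate because $f(\omega)\not\equiv 0$ for a nontrivial solution (e.g.\ test the equation against $\omega$, or note your harmonicity argument). With that one line added, your argument is complete and, if anything, sharper than the paper's.
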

\begin{proof}
By \eqref{eq:decay} there exist two positive constants $c_{1},\, c_{2}$ such that
\[
0<c_{1}\leq r^{N-2}\omega(r)\leq c_{2} \qquad \text{for all \ } r\geq 1.
\]
So it suffices to show that there exists $\rho>0$ such that the function $r\mapsto r^{N-2}\omega(r)$ is monotone in the interval $(\rho,\infty)$ or, equivalently, that the function $v: [1,\infty)\to (0,\infty)$ defined as $v(r):=\frac1{r^{N-2}\omega(r)}$ is monotone in $(\rho,\infty)$. To this aim, observe that 
\[
\Delta(r^{-(N-2)})= (r^{-(N-2)})''+\frac{N-1}r (r^{-(N-2})'=0,
\]
so that
\begin{align*}
0&=\Delta (v\omega)=\omega v''+2v'\omega'+v\omega''+\frac{N-1}r\left(v'\omega+v\omega'\right)
\\
&=\omega\left[v''+\left(2\frac{\omega'}\omega+\frac{N-1}r\right)v'\right]+\left(\omega''
+\frac{N-1}r\omega'\right)v.
\end{align*}
Then, $v$ satisfies 
\[
v''+b(r)v' +c(r)v=0, \qquad \text{with \ \ }b(r):=\frac{2\omega'}\omega+\frac{N-1}r\text{ \ and \ }c(r):=-\frac{f(\omega)}\omega<0.
\]
If there exists $\rho>0$ such that $v'(r)\geq 0$ for every $r\geq\rho$, or $v'(r)\leq 0$ for every $r\geq\rho$, then $v$ is monotone in $(\rho,\infty)$.
On the other hand, if such $\rho$ does not exist, then $v'$ changes sign infinitely many times in $(1,\infty)$. In particular, $v$ has a local maximum $r_0$ in $[1,\infty)$. But this implies that 
\[
0\geq v''(r_{0})=-c(r_{0})v(r_{0})>0,
\]
a contradiction. The proof is complete.
\end{proof}

\begin{remark}
In fact, $v=\frac1{r^{N-2}\omega}$ is nonincreasing. Indeed, defining $\underline{u}:=\frac{c_{1}}{r^{N-2}}$ and using \eqref{eq:decay}, for every $\rho>0$ one has
\[
\begin{cases}
\Delta(\underline{u}-\omega) =f(\omega) & \text{in \ }[\rho,+\infty),
\\
\left(\underline{u}-\omega\right)(\rho)\leq 0.
\end{cases}
\]
Then, the maximum principle implies that $\omega>\underline{u}$ and Hopf's Lemma yields $(\underline{u}-\omega)'(\rho)<0$.
Therefore $v=\frac{\underline{u}}{c_{1}\omega}$ satisfies
\[
v'(\rho)=\frac1{c_{1}}\frac{\underline{u}'\omega-\omega'\underline{u}}{\omega^{2}}
< \frac1{c_{1}}\frac{\omega'\omega-\omega'\underline{u}}{\omega^{2}}
=\frac{\omega'}{c_{1}}\frac{\omega-\underline{u}}{\omega^{2}}\leq 0,
\]
as  $\omega$ is decreasing.
\end{remark}

\begin{proof}[Proof of Lemma \ref{lem:decay}]
From Lemma \ref{lem:decay ground state} for each fixed $x\in\rn$ we get that
\begin{align*}
\lim_{|y|\to\infty}|y|^{N-2}f(\omega(x))\omega(x-y)&=\lim_{|y|\to\infty}\left|\frac{y}{x-y}\right|^{N-2}f(\omega(x))|x-y|^{N-2}\omega(x-y)\\
&=c_0f(\omega(x)).
\end{align*}
Note that, as $\omega$ is decreasing in the radial direction,
\begin{equation*}
|y|^{N-2}\omega(x-y)\leq
\begin{cases}
|x-y|^{N-2}\omega(x-y) &\text{if \ }|y|\leq |y-x|,\\
|y|^{N-2}\omega(y) &\text{if \ }|y|\geq |y-x|.
\end{cases}
\end{equation*}
So from \eqref{eq:decay} we get that $|y|^{N-2}f(\omega(x))\omega(x-y)\leq Cf(\omega(x))$ for every $y\in\rn$. Assumption $(f_1)$ and \eqref{eq:decay} yield $f(\omega(x))\leq C|x|^{-(q-1)(N-2)}$ and $q-1>\frac{N+2}{N-2}$. Therefore $f\circ\omega$ is integrable in $\rn$ and by the dominated convergence theorem
$$\lim_{|y|\to\infty}\irn |y|^{N-2}f(\omega(x))\omega(x-y)\d x = c_1\irn f(\omega(x))\d x.$$
The other identity is obtained in a similar way.
\end{proof}

\section{The proof of Lemma \ref{lem:acp}}

\begin{lemma}\label{lem:f}
Given $n\in\n$, $\bar u>0$ and $f\in \cC_\mathrm{loc}^{1,\beta}(\R)$ with $\beta\in(0,1]$ such that $f(0)=0$, there exists $b_1>0$ such that
$$\left|f\Big(\sum_{i=1}^nu_i\Big)-\Big(\sum_{i=1}^nf(u_i)\Big)\right|\leq b_1\sum_{\substack{i,j=1 \\ i<j}}^n|u_iu_j|^\beta\qquad\text{for any}\quad u_1,\ldots,u_n\in[-\bar u,\bar u].$$
\end{lemma}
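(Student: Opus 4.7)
The plan is to argue by induction on $n$, the base case being $n=2$, and to reduce the general case to this one by peeling off one summand at a time.

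For the base case $n=2$, I would start from the identity
\[
f(a+b)-f(a)-f(b)=\int_0^1\bigl[f'(a+tb)-f'(tb)\bigr]b\,\d t,
\]
which uses $f(0)=0$ to write $f(b)=\int_0^1 f'(tb)\,b\,\d t$. Since $a,b\in[-\bar u,\bar u]$ the points $a+tb$ and $tb$ lie in the compact interval $[-2\bar u,2\bar u]$, on which $f'$ is $\beta$-Hölder continuous with some constant $L$ depending on $\bar u$ (by the local $\mathcal{C}^{1,\beta}$ assumption). Hence $|f'(a+tb)-f'(tb)|\leq L|a|^\beta$, which gives
\[
|f(a+b)-f(a)-f(b)|\leq L|a|^\beta|b|.
\]
The right-hand side has the asymmetric form $|a|^\beta|b|$, but since $|b|\leq\bar u$ we may write $|b|=|b|^\beta|b|^{1-\beta}\leq\bar u^{\,1-\beta}|b|^\beta$, yielding the desired symmetric bound
\[
|f(a+b)-f(a)-f(b)|\leq L\bar u^{\,1-\beta}|ab|^\beta.
\]

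For the inductive step, assuming the estimate for $n-1$ with some constant, I would set $S:=u_1+\cdots+u_{n-1}$ and split
\[
f\!\Big(\sum_{i=1}^nu_i\Big)-\sum_{i=1}^nf(u_i)=\Bigl[f(S+u_n)-f(S)-f(u_n)\Bigr]+\Bigl[f(S)-\sum_{i=1}^{n-1}f(u_i)\Bigr].
\]
The second bracket is handled by the inductive hypothesis. For the first, I would apply the $n=2$ estimate in the larger interval $[-(n-1)\bar u,(n-1)\bar u]$ (both $S$ and $u_n$ lie there), obtaining a bound of the form $C|S|^\beta|u_n|^\beta$. Since $\beta\in(0,1]$, the elementary subadditivity $(t_1+\cdots+t_{n-1})^\beta\leq t_1^\beta+\cdots+t_{n-1}^\beta$ for nonnegative reals gives
\[
|S|^\beta|u_n|^\beta\leq\sum_{i=1}^{n-1}|u_i|^\beta|u_n|^\beta=\sum_{i=1}^{n-1}|u_iu_n|^\beta,
\]
which produces exactly the missing cross terms involving $u_n$ in the right-hand side of the claim.

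The main subtle point is the passage from the asymmetric estimate $|a|^\beta|b|$ that naturally emerges from Taylor's formula to the symmetric $|ab|^\beta$ required in the conclusion; this conversion relies essentially on working on a compact range, which is guaranteed by the hypothesis $u_i\in[-\bar u,\bar u]$. The rest is a straightforward induction, and the resulting constant $b_1$ may be taken to depend on $n$, $\bar u$, and the Hölder seminorm of $f'$ on $[-n\bar u,n\bar u]$.
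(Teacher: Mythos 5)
Your proposal is correct and follows essentially the same route as the paper: induction on $n$, with the base case obtained by writing $f(a+b)-f(a)-f(b)$ as an integral of differences of $f'$ (using $f(0)=0$), applying the local $\beta$-H\"older bound on $f'$, and symmetrizing via $|b|\leq\bar u^{1-\beta}|b|^\beta$, followed by peeling off $u_n$ and using subadditivity of $t\mapsto t^\beta$. If anything, you are slightly more careful than the paper in noting explicitly that the inductive step applies the two-term estimate on the enlarged interval $[-(n-1)\bar u,(n-1)\bar u]$, where the H\"older constant of $f'$ may be larger.
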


\begin{proof}
We argue by induction on $n$. As $f\in \cC_\mathrm{loc}^{1,\beta}(\R)$ and $f(0)=0$, we have that
\begin{align*}
&|f(u+v)-f(u)-f(v)|=\Big|\int_0^u(f'(s+v)-f'(s))\d s\Big|\\
&\leq\Big|\int_0^u|f'(s+v)-f'(s)|\d s\Big|\leq C|v|^\beta|u|\leq C\,\bar u^{1-\beta}|uv|^\beta\quad\text{for any \ }u,v\in[-\bar u,\bar u].
\end{align*}
Assume the result is true for $n-1\geq 2$ and let $u_1,\ldots,u_n\in[-\bar u,\bar u]$. Then,
\begin{align*}
&\Big|f\Big(\sum_{i=1}^nu_i\Big)-\Big(\sum_{i=1}^nf(u_i)\Big)\Big|\\
&\qquad\leq \Big|f\Big(\sum_{i=1}^nu_i\Big)-f\Big(\sum_{i=1}^{n-1}u_i\Big)-f(u_n)\Big|+\Big|f\Big(\sum_{i=1}^{n-1}u_i\Big)-\Big(\sum_{i=1}^{n-1}f(u_i)\Big)\Big| \\
&\qquad \leq C\Big(\Big|\sum_{i=1}^{n-1}u_iu_n\Big|^\beta + \sum_{\substack{i,j=1 \\ i<j}}^{n-1}|u_iu_j|^\beta \Big)\leq b_1\sum_{\substack{i,j=1 \\ i<j}}^n|u_iu_j|^\beta,
\end{align*}
as claimed.
\end{proof}

\begin{lemma}\label{lem:acp2}
Given $n\in\n$, $\bar u>0$ and $f\in \cC_\mathrm{loc}^{1,\beta}(\R)$ with $\beta\in(0,1]$ such that $f(0)=0=f'(0)$, there exists $b_2>0$ such that
$$
\left|F\Big(\sum_{i=1}^nu_i\Big)-\sum_{i=1}^nF(u_i)-\sum_{\substack{i,j=1 \\ i\neq j}}^nf(u_i)u_j\right|\leq b_2\left(\sum_{\substack{i,j=1 \\ i<j}}^n|u_iu_j|^{1+\frac\beta2} + \sum_{\substack{i,j,k=1 \\ i<j<k}}^n|u_iu_j|^\beta|u_k| \right),
$$ 
for any $u_1,\ldots,u_n\in[-\bar u,\bar u]$, where $F(u):=\int_0^uf$.
\end{lemma}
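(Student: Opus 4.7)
The plan is induction on $n$; once the base case $n=2$ is handled with the correct symmetric exponent, the passage from $n-1$ to $n$ follows by a standard decomposition combined with Lemma \ref{lem:f}.

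For $n=2$, I would work entirely with integrals. Since $F(u_1+u_2)-F(u_1)=\int_0^{u_2}f(u_1+s)\d s$, $F(u_2)=\int_0^{u_2}f(s)\d s$, $f(u_1+s)-f(s)=\int_0^{u_1}f'(s+r)\d r$, and since both $u_2f(u_1)$ and $u_1f(u_2)$ can be rewritten as iterated integrals of $f'(r)$ and $f'(s)$ respectively over the rectangle $[0,u_1]\times[0,u_2]$, one obtains
\[
F(u_1+u_2)-F(u_1)-F(u_2)-u_2f(u_1)-u_1f(u_2)=\int_0^{u_2}\!\!\int_0^{u_1}\bigl[f'(s+r)-f'(s)-f'(r)\bigr]\d r\d s.
\]
Using $f'(0)=0$ and the local H\"older continuity of $f'$ on $[-2\bar u,2\bar u]$, the integrand is bounded both by $2[f']_\beta|r|^\beta$ (grouping it as $[f'(s+r)-f'(s)]-f'(r)$) and by $2[f']_\beta|s|^\beta$ (grouping it as $[f'(s+r)-f'(r)]-f'(s)$). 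Hence it is at most $2[f']_\beta\min(|s|,|r|)^\beta\leq 2[f']_\beta|sr|^{\beta/2}$; integrating over $[0,u_1]\times[0,u_2]$ (with the obvious sign handling) yields $C|u_1u_2|^{1+\beta/2}$, which is the base case.

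For the induction step, set $S:=\sum_{i=1}^{n-1}u_i$ and apply the base case to the pair $(S,u_n)$, writing $F(S+u_n)=F(S)+F(u_n)+f(S)u_n+f(u_n)S+E_1$ with $|E_1|\leq C|Su_n|^{1+\beta/2}\leq C'\sum_{i\leq n-1}|u_iu_n|^{1+\beta/2}$. The induction hypothesis gives $F(S)=\sum_{i\leq n-1}F(u_i)+\sum_{i\neq j\leq n-1}f(u_i)u_j+E_2$ with $E_2$ already of the required form. Lemma \ref{lem:f}, applied with H\"older exponent $\beta$, gives $f(S)=\sum_{i\leq n-1}f(u_i)+E_3$ with $|E_3|\leq b_1\sum_{i<j\leq n-1}|u_iu_j|^\beta$; multiplying by $u_n$, the contribution $E_3u_n$ is precisely of the form $\sum_{i<j<k,\,k=n}|u_iu_j|^\beta|u_k|$. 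Meanwhile $f(u_n)S=\sum_{j\leq n-1}f(u_n)u_j$ is exact. Combining the three groups of cross terms reconstructs $\sum_{i\neq j\leq n}f(u_i)u_j$ exactly, and the errors $E_1$, $E_2$, $E_3u_n$ sum to a bound of the announced form.

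The main obstacle is obtaining the symmetric exponent $1+\tfrac{\beta}{2}$ on each of $|u_1|$ and $|u_2|$ in the base case: a naive Taylor expansion with Peano remainder would yield only an asymmetric bound $C|u_1|^{1+\beta}|u_2|$ or $C|u_1||u_2|^{1+\beta}$, which, fed into the induction, would not match the combinatorial structure of the statement. The key device is the two-sided H\"older bound on the integrand of the double integral, which via $\min(a,b)\leq\sqrt{ab}$ splits the regularity evenly between the two variables.
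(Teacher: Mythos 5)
Your proof is correct and is essentially the paper's own argument: your double integral of $f'(s+r)-f'(s)-f'(r)$ is exactly the mixed partial $\partial_u\partial_v G$ of the function $G(u,v):=F(u+v)-F(u)-F(v)-f(u)v-f(v)u$ that the paper (following Lions) integrates, with the same two-sided H\"older bound and the same $\min\{|s|,|r|\}^\beta\leq|sr|^{\beta/2}$ device producing the symmetric exponent $1+\frac{\beta}{2}$. The induction step — splitting off $u_n$, invoking Lemma \ref{lem:f} for $f\big(\sum_{i<n}u_i\big)-\sum_{i<n}f(u_i)$ multiplied by $u_n$, and absorbing the three error terms — likewise coincides with the paper's, the only difference being that you apply the min-to-geometric-mean bound pointwise inside the integral rather than after integrating.
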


\begin{proof}
We argue by induction on $n$. Following \cite[Lemma 3]{lions90}, we define 
\[
G(u,v):=F(u+v)-F(u)-F(v)-f(u)v-f(v)u,\qquad  u,v\in[-\bar u,\bar u].
\] 
Then $\partial_u\partial_vG(u,v)=f'(u+v)-f'(u)-f'(v)$ and, as $f\in \cC_\mathrm{loc}^{1,\beta}(\R)$ and $f'(0)=0$,
\begin{align*}
\left|\partial_u\partial_vG(u,v)\right|\leq\left|f'(u+v)-f'(u)\right|+\left|f'(v)\right|\leq C|v|^{\beta},\\
\left|\partial_u\partial_vG(u,v)\right|\leq\left|f'(u+v)-f'(v)\right|+\left|f'(u)\right|\leq C|u|^{\beta}.
\end{align*}
Therefore,
\[
\left|\partial_u\partial_vG(u,v)\right|\leq C\min\{|u|, |v|\}^{\beta}.
\]
Taking into account that $G(0,v)=\partial_uG(s,0)=0$ we get
\begin{align*}
\left|G(u,v)\right|&=
\left|\int_{0}^u\partial_uG(s,b)\d s\right|=\left|\int_{0}^u\int_{0}^v\partial_v\partial_uG(s,t)\d t\d s\right|\leq \left|\int_{0}^u\int_{0}^v| \partial_v\partial_uG(s,t)|\d t\d s\right|\\
&\leq C\left|\int_{0}^u\int_{0}^v\min\{|s|,|t|\}^{\beta}\d t\d s\right|\leq C\min\{|u|,|v|\}^{\beta}|uv|\leq b_2|uv|^{1+\frac{\beta}{2}},
\end{align*}
because
\[
\min\{|u|,|v|\}^{\beta}|uv|=|u|^{1+\frac\beta2+\frac\beta2}|v|\leq |uv|^{1+\frac\beta2}\qquad\text{for \ }|u|\leq |v|.
\]
This proves the statement for $n=2$. Assume the inequality holds true for $n-1\geq 2$ and let $u_1,\ldots,u_n\in[-\bar u,\bar u]$. Then, using
Lemma \ref{lem:f} we obtain
\begin{align*}
&
\Big|F\Big(\sum_{i=1}^nu_i\Big)-\sum_{i=1}^nF(u_i)-\sum_{\substack{i,j=1 \\ i\neq j}}^nf(u_i)u_j\Big| 
\\
&\leq\Big|F\Big(\sum_{i=1}^nu_i\Big)-F\Big(\sum_{i=1}^{n-1}u_i\Big)-F(u_n)-f\Big(\sum_{i=1}^{n-1}u_i\Big)u_n-\sum_{i=1}^{n-1}u_if(u_n)\Big| 
\\
&\qquad + \Big|F\Big(\sum_{i=1}^{n-1}u_i\Big)-\sum_{i=1}^{n-1}F(u_i)-\sum_{\substack{i,j=1 \\ i\neq j}}^{n-1}f(u_i)u_j\Big| + \Big|f\Big(\sum_{i=1}^{n-1}u_i\Big)u_n-\sum_{i=1}^{n-1}f(u_i)u_n\Big|
\\
&\leq C\Big(\Big|\sum_{i=1}^{n-1}u_iu_n\Big|^{1+\frac\beta2} + \sum_{\substack{i,j=1 \\ i<j}}^{n-1}|u_iu_j|^{1+\frac\beta2} + \sum_{\substack{i,j,k=1 \\ i<j<k}}^{n-1}|u_iu_j|^\beta|u_k| + \sum_{\substack{i,j=1 \\ i<j}}^{n-1}|u_iu_j|^\beta|u_n|\Big) 
\\
&\leq C\Big(\sum_{\substack{i,j=1 \\ i<j}}^{n}|u_iu_j|^{1+\frac\beta2} + \sum_{\substack{i,j,k=1 \\ i<j<k}}^{n}|u_iu_j|^\beta|u_k| \Big),
\end{align*}
as claimed.
\end{proof}

\begin{remark}
Note that the growth condition on $f'$ in $(f_1)$ does not imply that $f\in \cC^{1,\alpha}$ even in a small interval $[0,\delta]$.
\end{remark}

 \vspace{15pt}

\begin{flushleft}
\textbf{Mónica Clapp}\\
Instituto de Matemáticas\\
Universidad Nacional Autónoma de México \\
Campus Juriquilla\\
Boulevard Juriquilla 3001\\
76230 Querétaro, Qro.\\
Mexico\\
\texttt{monica.clapp@im.unam.mx} \vspace{10pt}

\textbf{Liliane A. Maia}\\
Departamento de Matemática\\
Universidade de Brasília \\
70910-900 Brasília\\
Brazil\\
\texttt{lilimaia@unb.br} \vspace{10pt}

\textbf{Benedetta Pellacci}\\
Dipartimento di Matematica e Fisica\\
Universit\`a della Campania ``Luigi Vanvitelli''\\
Viale Lincoln 5\\
81100 Caserta\\
Italy\\
\texttt{benedetta.pellacci@unicampania.it}
\end{flushleft}


\begin{thebibliography}{90}                                                                                               

\bibitem{acp} Ackermann, Nils; Clapp, Mónica; Pacella, Filomena: \emph{Alternating sign multibump solutions of nonlinear elliptic equations in expanding tubular domains}. Comm. Partial Differential Equations 38 (2013), no. 5, 751–779.

\bibitem {bpr}Badiale, Marino; Pisani, Lorenzo; Rolando, Sergio: \emph{Sum of weighted Lebesgue spaces and nonlinear elliptic equations.} NoDEA Nonlinear Differential Equations Appl. 18 (2011), no. 4, 369--405.

\bibitem {bm}Benci, Vieri; Micheletti, Anna Maria: \emph{Solutions in exterior domains of null mass nonlinear field equations.} Adv. Nonlinear Stud. 6 (2006), no. 2, 171--198.

\bibitem{bl} Berestycki, Henry; Lions, Pierre L.: \emph{Nonlinear scalar field equations. I. Existence of a ground state.} Arch. Rational Mech. Anal. 82 (1983), no. 4, 313--345. 

\bibitem{bcm} Bracho, Javier; Clapp, Mónica; Marzantowicz, Wacław: \emph{Symmetry breaking solutions of nonlinear elliptic systems}. Topol. Methods Nonlinear Anal. 26 (2005), no. 1, 189–201.  

\bibitem{ccs} Cingolani, Silvia; Clapp, Mónica; Secchi, Simone: \emph{Multiple solutions to a magnetic nonlinear Choquard equation}. Z. Angew. Math. Phys. 63 (2012), no. 2, 233–248. 

\bibitem{cm} Clapp, Mónica; Maia, Liliane A.: \emph{Existence of a positive solution to a nonlinear scalar field equation with zero mass at infinity.} Adv. Nonlinear Stud. 18 (2018), no. 4, 745–762.

\bibitem{cmp} Clapp, Mónica; Maia, Liliane A.; Pellacci, Benedetta: \emph{Positive multipeak solutions to a zero mass problem in exterior domains.} Commun. Contemp. Math. 23 (2021), no. 2, Paper No. 1950062, 22 pp.

\bibitem{cpw} Clapp, Mónica; Pistoia, Angela; Weth, Tobias: \emph{An upper bound for the least energy of a nodal solution to the Yamabe equation on the sphere.} Minimax Theory Appl. 7 (2022), no. 2, 173–184. 

\bibitem{cso} Clapp, Mónica; Soares, Mayra: \emph{Energy estimates for seminodal solutions to an elliptic system with mixed couplings.} Preprint arXiv:2207.00498.

\bibitem{cs} Clapp, Mónica; Srikanth, P. N.:\emph{ Entire nodal solutions of a semilinear elliptic equation and their effect on concentration phenomena}. J. Math. Anal. Appl. 437 (2016), no. 1, 485–497.

\bibitem{km} Khatib, Alireza; Maia, Liliane A.: \emph{A note on a positive solution of null mass nonlinear field equation in exterior domains.}  Proc. Roy. Soc. Edinburgh Sect. A 150 (2020), no. 2, 841–870.

\bibitem{lions90} Lions, P.-L.: \emph{Lagrange multipliers, Morse indices and compactness}. Variational methods (Paris, 1988), 161–184, Progr. Nonlinear Differential Equations Appl., 4, Birkhäuser Boston, Boston, MA, 1990.

\bibitem{l} Lopes, Orlando: \emph{Radial symmetry of minimizers for some translation and rotation invariant functionals.} J. Differential Equations 124 (1996), no. 2, 378–388.

\bibitem{m} Mederski, Jarosław: \emph{General class of optimal Sobolev inequalities and nonlinear scalar field equations}. J. Differential Equations 281 (2021), 411–441.

\bibitem{p} Palais, Richard S.:\emph{ The principle of symmetric criticality.} Comm. Math. Phys. 69 (1979), no. 1, 19--30.

\bibitem {v}Vetois, Jerome: \emph{A priori estimates and application to the symmetry of solutions for critical p-Laplace equations.} J. Differential Equations 260 (2016), no. 1, 149--161.

\end{thebibliography}
\end{document}